\numberwithin{equation}{section}
\newcommand{\diam}{\operatorname{diam}}
\newcommand{\R}{\operatorname{\mathbb{R}}}
\newcommand{\mLaplace}{\boldsymbol{\Delta}}
\theoremstyle{plain}
\newtheorem{thm}{Theorem}[section]
\newtheorem{lem}[thm]{Lemma}
\newtheorem{prop}[thm]{Proposition}
\newtheorem{cor}[thm]{Corollary}
\theoremstyle{definition}
\theoremstyle{remark}
\begin{document}

\title{Ricci  curvature in dimension 2}

\author{Alexander Lytchak}
\address{Alexander Lytchak:
Mathematisches Institut,
Universit\"at K\"oln,
Weyertal 86--90, 50931, K\"oln, Germany}
\email{alytchak@math.uni-koeln.de}

\author{Stephan Stadler}
\address{Stephan Stadler: Mathematisches Institut der Universit\"at M\"unchen, Theresienstr. 39, D-80333 M\"unchen, Germany}
\email{stadler@math.lmu.de}

\subjclass[2010]{53C23, 30L10, 49Q05, 49J52}

\keywords{RCD spaces, Alexandrov spaces, synthetic Ricci curvature, conformal parametrization}
%Busemann convex space, Berwald metric,
%convex geodesic bicombing}

%\tableofcontents

\begin{abstract}
We prove that in two dimensions the synthetic notions of lower bounds on sectional and on Ricci curvature coincide.
\end{abstract}

\maketitle

\section{Introduction}
\subsection{The result} In this note we provide an affirmative answer to a question raised by C\'{e}dric Villani, \cite[Open Problem 9]{Villani}.

\begin{thm} \label{thm: main}
%{\color{red}  If a metric measure space $(X,d,\mu)$	 is an
Let $(X,d)$ be a metric space and let
$\mathcal H^2$ be the $2$-dimensional Hausdorff measure on $X$. If $(X,d,\mathcal H^2)$ is an
$RCD (\kappa, 2)$ space
then $X$ is an Alexandrov space  of curvature at least $\kappa$.	
\end{thm}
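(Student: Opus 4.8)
The plan is to reduce the statement to the classical two–dimensional theory of surfaces of bounded integral curvature (Alexandrov--Reshetnyak), in which a lower curvature bound is encoded by a \emph{curvature measure}, and then to show that the hypothesis $RCD(\kappa,2)$ forces this measure to satisfy $\omega\ge\kappa\cdot\mathcal{H}^2$. The first step is to settle the structure of $X$. Since the reference measure is the top–dimensional Hausdorff measure $\mathcal{H}^2$, the space is non-collapsed, so by the structure theory of non-collapsed $RCD(\kappa,N)$ spaces it has Hausdorff dimension $2$, its regular set is an open $2$–manifold on which the tangent cones are Euclidean, and the singular set has codimension at least one. In dimension two this says that $X$ is a topological surface, possibly with boundary, with at most isolated remaining singularities, and that $\mathcal{H}^2$-almost every point is a manifold point with a Euclidean tangent plane.

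Next I would produce a \emph{conformal parametrization}. Locally, away from the few singular points, I would represent the intrinsic length metric as a conformal change $e^{2u}g_0$ of a flat background metric $g_0$, and identify the intrinsic Laplacian of the $RCD$ structure with the conformal Laplacian $e^{-2u}\Delta_0$, where $\Delta_0$ is the flat Laplacian. The essential analytic point is that $u$ is locally a difference of subharmonic functions, so that $\Delta_0 u$ is a signed Radon measure; up to sign this is the curvature measure $\omega$, because in two dimensions the Gauss curvature of $e^{2u}g_0$ equals $K=-e^{-2u}\Delta_0 u$, and hence $\omega=K\,e^{2u}\,dx=-\Delta_0 u$ as a distribution.

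The heart of the argument is to convert the synthetic Ricci bound into the inequality $\omega\ge\kappa\cdot\mathcal{H}^2$. In dimension two the Ricci tensor equals $K$ times the metric, so morally $RCD(\kappa,2)$ asserts that $K\ge\kappa$ weakly; concretely I would run the $\Gamma_2$/Bochner calculus available on $RCD$ spaces and read off, through the conformal identification of the Laplacian, that $-\Delta_0 u\ge\kappa\,e^{2u}$ in the distributional sense, i.e.\ $\omega\ge\kappa\cdot\mathcal{H}^2$. Once this measure bound is established, the classical characterization of two-dimensional Alexandrov spaces in terms of their curvature measure (a length surface whose curvature measure dominates $\kappa$ times the area satisfies triangle comparison with bound $\kappa$) yields that $X$ has curvature $\ge\kappa$. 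Boundary points and the remaining isolated singularities correspond to non-negative atoms of $\omega$, that is, to cone angles at most $2\pi$, which are compatible with the comparison inequalities and are treated separately.

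The main obstacle is the third step: passing rigorously from the optimal-transport, or equivalently weak-Bochner, formulation of the Ricci bound to a genuine lower bound on the geometric curvature measure $\omega$ in the conformal chart. This requires both enough regularity of the conformal factor $u$ to give $\Delta_0 u$ a meaning as a measure, and a precise matching of the abstract $RCD$ Laplacian and heat flow with the conformal Laplace operator on the chart.
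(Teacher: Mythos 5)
Your overall strategy---non-collapsed structure theory, a local conformal chart, Bochner/$\Gamma_2$ to get $-\Delta_0 u\ge \kappa e^{2u}$ as measures, then Reshetnyak's theory of surfaces of bounded integral curvature---is exactly the route the paper takes, and you have correctly identified the conversion of the weak Ricci bound into a measure bound on $\Delta_0 u$ as the analytic heart. But two steps you pass over are genuine gaps, not routine details. First, the conformal factor produced by the quasiconformal/Sobolev machinery only controls the lengths of \emph{almost every} curve (in the sense of $2$-modulus). The Reshetnyak metric $d_f$ built from $u$ is an infimum over \emph{all} rectifiable curves, so a priori it need not coincide with the intrinsic metric of the chart, and the Alexandrov bound for $d_f$ would then say nothing about $X$. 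The paper closes this gap with the Sobolev-to-Lipschitz property of RCD spaces (and, in the other direction, with the fact that small pieces of $(O,d_f)$ are Alexandrov spaces); some such argument is indispensable and absent from your sketch.

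Second, your treatment of the singular set conflates two very different things. Interior singular points are indeed isolated and can be absorbed as cone points (atoms of $\omega$), but $\partial X$ is a one-dimensional set, and surfaces \emph{with boundary} are not covered by the Alexandrov--Reshetnyak curvature-measure characterization; ``treated separately'' hides the hardest remaining work. One must (a) prove the purely topological statement that near a boundary point $X$ is homeomorphic to a closed half-plane (the paper does this by applying the Cheeger--Colding--Reifenberg theorem twice, once to $\partial X$ and once to the doubling of $X$ across $\partial X$), (b) use the essentially non-branching property of RCD spaces to show that geodesics between regular points cannot touch $\partial X$, so the regular part is convex, and (c) invoke Petrunin's globalization theorem for non-complete spaces to pass from the local comparison on the (dense, convex, but incomplete) regular part to the statement that all of $X$ is an Alexandrov space of curvature $\ge\kappa$. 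Without (b) and (c) you only obtain a local curvature bound on an open dense subset, which does not by itself yield the theorem.
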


The converse of our theorem is due to  Anton Petrunin,  \cite{Petrunin}.

 As a consequence of Theorem \ref{thm: main},  all $RCD(\kappa ,2)$  spaces $(X,d,\mathcal H^2)$ are topological surfaces, possibly with boundary.

\subsection{Theory of RCD spaces}
The synthetic theory of spaces with lower Ricci curvature bounds, initiated in \cite{Sturm} and \cite{Lott}, has experienced a tremendous growth over the last 15 years.
We refer the reader to the surveys \cite{Villani} and \cite{Ambrosio-review} for an overview of the current state of the theory and the huge bibliography.

The analytic aspects of the theory are  highly developed. Most results, previously known on Riemannian manifolds with
lower bounds on Ricci curvature, now possess natural generalizations to the synthetic framework, see for instance \cite{AGS},
\cite{Gigli-memoirs}, \cite{Mcav}.  Moreover, there is a sophisticated understanding of measure theoretic, or rather ``almost everywhere'' properties  of such spaces, \cite{Mondino}, \cite{Constancy}.

On the other hand, gaining information on the ``everywhere'' metric structure, or even less ambitious, on the topological structure of RCD spaces,  seems  to be very difficult. Only a few results in this direction are known, all of which require either a rigid setting, \cite{Gigli}, \cite{Volumerig}, or strong a priori assumptions on the geometry, \cite{KK}, \cite{lowdimRCD}.

 % While our proof does not shed any light on non-Riemannian $CD(2,\kappa)$ spaces, some central points may survive for $RCD(2,\kappa)$ spaces with
 % a reference measure $\mu$ different from the Hausdorff measure.

\subsection{Central steps in the proof}
The proof is a combination of analytic RCD-techniques, some basic results about Alexandrov spaces, the  canonical uniformization  of singular surfaces introduced in \cite{LW-parameter} and Reshetnyak's analytic theory of surfaces with integral curvature bounds, \cite{Reshetnyak-GeomIV}.

In the first step, we use the Bishop--Gromov comparison and  the cone-rigidity theorem, \cite{Volumerig}, to study the infinitesimal structure of a space $X$ as in Theorem \ref{thm: main}. More precisely, it is shown that unique tangent spaces exist at all points and are Euclidean cones over either intervals or circles.   In analogy with the theory of Alexandrov spaces, we call the set of points, at which the tangent space is homeomorphic to a half-plane, the \emph{boundary} of $X$ and denote it by $\partial  X$.  Another application of the Bishop--Gromov comparison implies that $\partial X$ is a closed subset of $X$.
% Note that  the corresponding statement in higher dimensions is much deeper, even in the case of Alexandrov spaces, \cite{P2}.
This is the content of Section 2.

In Section 3, we consider, for any  $\delta >0$, the set  of points $X^{\delta}$ at which
the density of $X$ is $\delta$-close to the density of the Euclidean plane.
The set $X^{\delta}$ is open, dense, disjoint from $\partial X$ and, by
the theorem of Cheeger--Colding--Reifenberg, \cite{CC},  $X^{\delta}$ is a topological surface without boundary, once $\delta$ is small enough.   Due to \cite{Mondino-isop},  $X^{\delta}$ satisfies an isoperimetric inequality
of Euclidean type.

The next two sections are the central pieces of the proof.
In Section 4, we use \cite{LW-parameter}, to find for  any point $x\in X^{\delta}$  a small closed
neighborhood $\bar \Omega$ and a canonical  parametrization $\phi:\bar D\to \bar \Omega$ obtained  by solving  a Plateau problem.
This parametrization is a \emph{quasisymmetric map}, which in addition is \emph{infinitesimally conformal}. This implies, in particular, that there exists a function  $f\in L^2(D)$, the \emph{conformal factor}, such that
the lengths of \emph{almost all curves} in $\bar \Omega$ are obtained by integrating $f$ along the corresponding curve in $\bar D$.  In other words, the metric in $\bar \Omega$ appears to be obtained from the flat metric in
$\bar D$ by a ``singular conformal change with factor $f$''.

In Section 5, we identify the Laplace operator of $\Omega$ in terms of $D$ and $f$ and use the  Bakry--\'Emery property of
RCD spaces to derive an analytic condition on $f$. This condition implies
 log-subharmonicity in the case $\kappa =0$ and a related property for general $\kappa$.

  Using \cite{Reshetnyak-GeomIV},
	%(or just a direct limiting argument, and the \emph{Sobolev-to-Lipschitz} property of RCD spaces,)
	we  deduce in Section 6 that the metric  on $\Omega$ is {\em everywhere} controlled by
$f$, and has curvature bounded below by  $ \kappa$ in the sense of Alexandrov.

In order to finish the proof of the main theorem  we
 show that $X^{\delta}$ is a convex subset of $X$,   a statement which is expected to be true  in much greater generality, \cite{CN}.  Once the convexity is verified, Toponogov's globalization theorem, in the form of \cite{Petrunincomplete},  provides that last step.   The proof of this convexity in Section 7, relies
 on the non-branching property of RCD spaces, \cite{Rajala-Sturm}.  Assuming that a geodesic between two non-boundary points passes through  a point $x$ on the boundary $\partial X$, we easily obtain many branching geodesics, once we know that $X$ is a topological surface with boundary near $x$.  While in general, topological control  in RCD spaces  is very difficult to achieve, here we obtain the required statement by a twofold application of the Cheeger-Colding--Reifenberg theorem.

%{\color{red} Our proof indeed shows that in Theorem \ref{thm: main} one could replace the assumption $RCD(\kappa ,2)$ by the assumption that
%	$X$ is $RCD(\kappa, \infty)$.  On the other hand, it might be tempting to extend the result to RCD spaces equipped with a measure different from $\mathcal H^2$.  While many parts of the proof would carry over, 
%	a few very central steps, like the control of the topology near regular points would require additional considerations.}

\subsection{Acknowledgment} The paper arose as a follow-up project to a series of joint papers of the first author and  Stefan Wenger. For this and
inspiring discussions the   authors express their gratitude to Stefan Wenger.
We are grateful to Vitali Kapovitch and Anton Petrunin for helpful conversations.
 For helpful comment on  previous versions we would like to thank Nicola Gigli, Martin Kell, Christian Ketterer and Andrea Mondino.  
Both authors were partially supported by DFG grant  SPP 2026.
%The authors are grateful to Stefan Wenger for very helpful comments.

\section{Basic structure} \label{sec: 2}
\subsection{Notation}
By $D$  we denote the open  unit disc in $\R^2$ and by $\bar D$ its closure. By $\mathcal H^k$ or $\mathcal H^k_X$ we denote the $k$-dimensional Hausdorff measure on a metric space $X$.

In a metric space $X$ we denote by $d$ the distance and by
  $B_r(x)$ the  open ball of radius $r$ around the point $x$.
  % in a metric space $X$.
By $\ell (\gamma) =\ell _X(\gamma)$ we will denote the length of a curve $\gamma$
in a metric space $X$.
%By a \emph{continuum} we will denote a compact connected
%metric space. By $\mathcal H^k$ or $\mathcal H^k_X$ we denote the $k$-dimensional Hausdorff measure on a metric space $X$.

%By $D$ and $\bar D$ we denote the open and closed unit disc in $\R^2$, respectively.

\subsection{Setting} We assume some familiarity with the synthetic theory of lower Ricci curvature bounds.  In particular, we assume that the reader is familiar with the notion of
$RCD(\kappa, N)$ spaces, which we will not define.

%While we use only very special case of many deep results in the theory of RCD spaces, we have relied on the general statements and have not tried to present simpler arguments, whenever such a presentation would be of some length.

  In the rest of the paper
we fix a  space $X$ satisfying the assumptions of Theorem \ref{thm: main}.
Thus, $X$ is a geodesic, locally compact metric space. By assumption, the space
$X$ is $RCD(\kappa, 2)$ with respect to the reference measure $\mathcal H^2$,
whose support is $X$.
In the terminology introduced in \cite{ncRCD}, this means that $X$ is  a \emph{non-collapsed}
$RCD(\kappa ,2)$ space.

For $x\in X$ and $r>0$ we set
\begin{equation} \label{eq: bg}
b(x,r) =\mathcal H^2 (B_r(x)) \,.
\end{equation}
Recall that  the balls satisfy the \emph{Bishop--Gromov property}, thus,
for all $s<r$, the quotient $b(x,r)/b(x,s)$ is bounded
from above by the corresponding quotient in the $2$-dimensional simply connected Riemannian manifold $M^2_{\kappa}$ of constant curvature $\kappa$, \cite{Sturm}.

Therefore, for any  $x\in X$ we have a well-defined positive \emph{density}
$$0< b(x):= \lim _{r\to 0} \frac { b(x,r)} {\pi r^2}  \leq 1\, ,$$
 see \cite[Corollary 2.13]{ncRCD}.

Again by the Bishop-Gromov property, the space $X$ is \emph{locally $2$-Ahlfors regular}. Thus,
for every  compact subset $K$ of $X$ there exists some $L>0$, such that
 all $x\in K$ and all $0<r<1$ we
have
$$\frac 1 L \cdot r^2\leq \mathcal H^2 (B_r(x)) \leq L\cdot r^2 \, .$$

\subsection{Convergence and blow-ups}
%The function $b:X\to \R$  is semi-continuous. More generally,
Any sequence  $(Y_i,y_i)$  of non-collapsed $RCD(\kappa, 2)$-spaces
has a subsequence converging in the pointed Gromov--Hausdorff topology
to a limit space $(Y,y)$. This limit space $Y$ is an $RCD(\kappa ,2)$-space
with respect to a limiting measure $\mu$, see \cite{AGS}, \cite{GMS}.  If there is $\epsilon >0$ such that $\mathcal H^2 (B_1 (y_i)) \geq \epsilon$, for all $i$,  then the limit space $Y$ is non-collapsed and $\mu=\mathcal H^2$,
\cite[Theorem 1.2]{ncRCD}.

 Moreover, in this situation the densities behave semi-continuously:
$$b(y) \leq \liminf b(y_i) \,.$$
 In particular, the density function $b:X\to (0,1]$ is lower semi-continuous.

As a special case of \cite[Theorem 1.2]{ncRCD}, for any sequence $x_i\in X$ converging to a point $x\in X$ and any
sequence $r_i$ of positive numbers converging to $0$, there is a subsequence,
such that  the rescaled spaces $(\frac 1 {r_i} X, x_i)$ converge to a
non-collapsed $RCD(2,0)$ space $(Y,y)$.   We call any such space $(Y,y)$ a
\emph{blow-up} of the space $X$.  By the continuity  of $\mathcal H^2$
with respect to the convergence
%(thus the fact mentioned above, that the limiting measure on $Y$ is the $2$-dimensional Hausdroff measure)
and by  the Bishop--Gromov inequalities in $X$, we have, for all $z\in Y$ and all $s\geq 0$,
\begin{equation} \label{eq: limitvol}
\mathcal H^2 (B_s(z)) \geq \pi \cdot b(x) \cdot s^2 \, .
\end{equation}

\subsection{Tangent cones}
A blow-up for a constant sequence of base points $x_i=x$  is called a tangent cone
of $X$ at the point $x$.  Relying on the volume rigidity of metric cones, \cite{Volumerig}, it has been shown in \cite[Proposition 2.7]{ncRCD},  that
any such tangent cone $T$ of $X$ at the point $x$ is isometric to a Euclidean cone
$C(Z)$ over a compact  space  $Z$. Moreover, $Z$ is an $RCD(0,1)$ space of diameter at most $\pi$, \cite{Ketterer}.

As has been proved in \cite{lowdimRCD}, such a space $Z$ is a closed interval or a circle.   Due to the stability
of $\mathcal H^2$ and the definition of  $b(x)$, the density
$b(x)$ coincides with the density of $CZ$ at the vertex $0_x$ of the cone $CZ$.
Therefore,
$$b(x)=\frac 1 {2\pi} \mathcal H^1 (Z) \, .$$

Thus, at any point $x\in X$ at most two different tangent cones can exist.
%However, the following lemma, probably well-known to experts, shows that
%the set of all tangent cones at a given point is always a connected set, thus cannot consist of two points.

However, the following lemma, valid for all doubling metric measure spaces and  probably well-known to experts shows that
there cannot exist exactly two different tangent cones.

\begin{lem}
 The space $\mathcal T ^x $ of all metric measure tangent cones $(Y,y,\mathcal H^2)$  at the point $x \in X$ is a connected subset 
of the space $\mathbb X$  of isomorphism classes of all pointed, proper metric measure  spaces, \cite{GMS}.
\end{lem}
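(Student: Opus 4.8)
The plan is to exhibit $\mathcal T^x$ as the set of limit points (the $\omega$-limit set) of a single continuous curve in $\mathbb X$ and then to invoke the general topological fact that such limit sets are connected. For $r\in(0,1]$ set
$$\sigma(r):=\Big(\tfrac1r X,\,x,\,\mathcal H^2_{\frac1r X}\Big)\in\mathbb X,$$
the rescaling of $X$ by the factor $1/r$ based at $x$, equipped with its own Hausdorff measure. Since $\mathcal H^2_{\frac1r X}=\tfrac1{r^2}\mathcal H^2_X$, each $\sigma(r)$ is a pointed metric measure space of the same type, and by the definition of a tangent cone, $\mathcal T^x$ is precisely the set of subsequential limits of $\sigma(r_i)$ along sequences $r_i\to 0$. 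Equivalently,
$$\mathcal T^x=\bigcap_{0<\delta\le 1}\overline{\sigma\big((0,\delta]\big)}.$$

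First I would verify that $\sigma$ is continuous on $(0,1]$. For $r,r_0>0$, the identity map of $X$, viewed as a map $\tfrac1r X\to\tfrac1{r_0}X$, distorts distances inside a fixed ball $B_R$ (measured in $\tfrac1{r_0}X$) by at most $\big|\tfrac1r-\tfrac1{r_0}\big|\cdot 2Rr_0$, which tends to $0$ as $r\to r_0$, while the ratio of the two reference measures is the constant $r_0^2/r^2\to 1$. Hence the identity is an $o(1)$-isometry realizing the convergence $\sigma(r)\to\sigma(r_0)$ in the pointed measured Gromov--Hausdorff topology, so $\sigma$ is continuous. Next I would establish that $\sigma\big((0,1]\big)$ is relatively compact. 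Because $X$ is doubling (locally $2$-Ahlfors regular), a direct computation with the regularity estimate shows that on each fixed ball $B_R$ the rescaled spaces satisfy upper and lower volume bounds and a doubling constant that are uniform in $r\in(0,1]$; the corresponding Gromov-type compactness theorem for pointed metric measure spaces then yields that $\overline{\sigma((0,1])}$ is compact and metrizable.

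It remains to deduce connectedness. Each set $\overline{\sigma\big((0,\delta]\big)}$ is compact, being a closed subset of $\overline{\sigma((0,1])}$, and connected, being the closure of the continuous image of the connected interval $(0,\delta]$. As $\delta\to 0$ these form a nested (decreasing) family of nonempty compact connected subsets of the Hausdorff space $\mathbb X$, and a nested intersection of compact connected sets is connected. Therefore $\mathcal T^x$ is a nonempty connected subset of $\mathbb X$.

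The only point requiring genuine care—and hence the main obstacle—is the verification that $\sigma$ is continuous for the \emph{measured}, and not merely metric, Gromov--Hausdorff topology, together with the bookkeeping that the chosen normalization of the reference measure is preserved under rescaling. Once the continuity and the uniform doubling bounds are in place, the remainder of the argument is purely topological and, as remarked, applies verbatim to any doubling metric measure space.
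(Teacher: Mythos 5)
Your proposal is correct and follows essentially the same route as the paper: both exhibit $\mathcal T^x$ as the set of limit points of the continuous curve $r\mapsto(\tfrac1r X,x,\mathcal H^2)$ in $\mathbb X$, note that this curve has compact closure, and conclude by writing $\mathcal T^x$ as a nested intersection of compact connected sets (the closures of the tails). You simply supply more detail on the continuity of the curve and the uniform doubling bounds, which the paper leaves implicit.
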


\begin{proof}
Consider the map $F:(0,1]\to \mathbb X$ which sends a number $r>0$ onto  the pointed metric measure space $(\frac 1 r X,x, \mathcal H^2)$.  The map  is continuous and the image  of $F$ 
has  compact closure  in $\mathbb X$.
 
The set $\mathcal T^x$ consists of all limit points $\lim _{r_i\to 0} F(r_i)$  of this continuous curve. Thus, $\mathcal T^x$ is an intersection
of  a nested sequence of compact connected sets, the closures of the tails of $F$.  Hence, $\mathcal T^x$ is connected.
\end{proof}

Therefore, we have shown:
\begin{cor}
At any  $x\in X$ there exists a unique tangent cone.   This tangent
cone is a Euclidean cone $CZ$, where $Z$ is  a circle or an interval.
Moreover, $\mathcal H^1 (Z)=  b(x)\cdot 2\pi$ and  the diameter of $Z$ is at most $\pi$.
%the second case can occur
%only if $b(x)\leq \frac 1 2$.
\end{cor}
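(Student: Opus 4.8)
The plan is to combine the two preceding results. The passage just before the corollary establishes that every tangent cone at $x$ is a Euclidean cone $CZ$ over a circle or an interval, with $\mathcal H^1(Z) = 2\pi \cdot b(x)$ and $\diam(Z) \le \pi$; the only missing piece is uniqueness, and that is precisely what the Lemma delivers. So the entire content of the corollary reduces to ruling out the possibility that two distinct tangent cones coexist, and then reading off the geometric data from the classification already in hand.

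First I would invoke the Lemma to conclude that the space $\mathcal T^x$ of metric measure tangent cones at $x$ is connected. Second, I would combine this with the dichotomy stated two paragraphs above the Lemma: at any $x \in X$ at most two distinct tangent cones can exist, since the isomorphism type of the cone $CZ$ is determined by $Z$, and $Z$ is forced to be either a circle or an interval whose length equals $2\pi \cdot b(x)$ (a fixed number independent of which tangent cone we chose, as $b(x)$ is intrinsic). Thus $\mathcal T^x$ is a set with at most two elements. A connected subset of $\mathbb X$ with at most two points must be a single point, because a two-point space is totally disconnected. Hence $\mathcal T^x$ is a singleton: the tangent cone is unique.

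Once uniqueness is secured, the remaining assertions are merely a restatement of the structural facts already proved for an arbitrary tangent cone. The unique cone has the form $CZ$ with $Z$ a circle or interval; the density identity $b(x) = \frac{1}{2\pi}\mathcal H^1(Z)$ gives $\mathcal H^1(Z) = 2\pi \cdot b(x)$; and the $RCD(0,1)$ structure of $Z$ bounds its diameter by $\pi$. Nothing new needs to be computed here.

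I do not expect a genuine obstacle, since both ingredients are already established in the excerpt. The only subtlety worth flagging is the compatibility of the two "at most two tangent cones" counts: the classification bounds the number of \emph{cross-sections} $Z$ (circle versus interval of prescribed length), and I would want to be sure this translates correctly into a bound on \emph{isomorphism classes in $\mathbb X$} as metric measure spaces, so that the connectedness argument applies to the right object. Granting that translation — which is immediate, since a Euclidean cone is determined up to measured isomorphism by its cross-section — the corollary follows with no further work.
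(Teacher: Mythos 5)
Your argument is exactly the paper's: the preceding discussion yields at most two possible tangent cones (cone over a circle or over an interval, both with cross-section of length $2\pi\cdot b(x)$ and diameter at most $\pi$), and the connectedness Lemma rules out having exactly two, so the tangent cone is unique and the remaining assertions are restatements of the classification. The proposal is correct and matches the paper's proof.
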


We denote the unique tangent cone at the point $x\in X$ by $T_x=T_xX$.
By above $T_x$ is homeomorphic to a plane or to a half-plane.

%By $\Sigma _x =\Sigma _xX$ we denote the unit sphere (with its intrinsic metric) in $T_xX$ around the vertex, thus the interval or a circle of length $2\pi \cdot b(x)$. We call
%$\Sigma _x$ the \emph{link} at $x$

\subsection{The boundary}
We define \emph{the boundary} $\partial X$ of $X$ to be the set of all points
$x\in X$ for which  the tangent space $T_xX$ is homeomorphic to a half-plane.

%We claim:
\begin{lem} \label{lem: boundclosed}
The boundary $\partial X$ is a closed subset of $X$ which contains only points $x$ with $b(x)\leq \frac 1 2$.
\end{lem}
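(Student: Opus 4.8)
The plan is to treat the two assertions separately: the density bound follows directly from the cone structure, while the closedness is proved by a blow-up argument.

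First I would establish the density estimate. If $x\in\partial X$ then, by the Corollary above, the link $Z$ of the tangent cone $T_xX=CZ$ is an interval. For an interval equipped with its length metric the diameter equals the total length, so $\mathcal H^1(Z)=\diam(Z)\le\pi$. Combining this with the identity $\mathcal H^1(Z)=2\pi\, b(x)$ yields $2\pi\, b(x)\le\pi$, that is $b(x)\le\frac12$.

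For the closedness I would argue by contradiction. Suppose $x_i\to x$ with $x_i\in\partial X$ but $x\notin\partial X$; we may assume $x_i\ne x$ and set $r_i=d(x,x_i)\to 0$. Since $x\notin\partial X$, the unique tangent cone is $T_xX=CZ$ with $Z$ a circle, so $(\frac1{r_i}X,x)\to(CZ,0_x)$ in the pointed Gromov--Hausdorff sense. In the rescaled space $x_i$ lies at distance $1$ from $x$, hence, after passing to a subsequence, it converges to a point $y\in CZ$ with $d(0_x,y)=1$. Because $Z$ is a circle, $CZ$ is flat away from its vertex, so $y$ is a manifold point and $b_{CZ}(y)=1$. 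I will contradict this by estimating the volume of small balls around $y$. Denoting by $v_\kappa(r)$ the area of an $r$-ball in $M^2_\kappa$, the Bishop--Gromov monotonicity of $r\mapsto \mathcal H^2(B_r(\cdot))/v_\kappa(r)$ gives $\mathcal H^2(B_r(x_i))\le b(x_i)\cdot v_\kappa(r)$ for every $r$, so by the first part
\[
\mathcal H^2\big(B_{r}(x_i)\big)\le \tfrac12\, v_\kappa(r)\qquad\text{for all } r>0 .
\]
Rescaling by $r_i$ and using lower semicontinuity of the mass of open balls under the non-collapsed convergence (with $x_i\to y$), I obtain for every small $s>0$
\[
\mathcal H^2_{CZ}\big(B_s(y)\big)\le \liminf_{i\to\infty}\frac1{r_i^2}\,\mathcal H^2\big(B_{s r_i}(x_i)\big)\le \liminf_{i\to\infty}\frac{v_\kappa(s r_i)}{2\,r_i^2}=\frac{\pi s^2}{2} .
\]
Letting $s\to0$ gives $b_{CZ}(y)\le\frac12$, contradicting $b_{CZ}(y)=1$. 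Hence $x\in\partial X$, and $\partial X$ is closed.

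The cone geometry is routine; the main obstacle is the density transfer in the last step, namely passing the ``half volume'' bound from the boundary points $x_i$ into the blow-up limit at the point $y$. This relies on the correct semicontinuity of ball-volume under pointed convergence with moving base points, which is precisely where the continuity of $\mathcal H^2$ for non-collapsed $RCD$ spaces enters, combined with Bishop--Gromov to control all scales uniformly.
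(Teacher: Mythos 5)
Your proof is correct and follows essentially the same route as the paper: the density bound comes from $\mathcal H^1(Z)=2\pi b(x)$ together with $\diam(Z)\le\pi$ for the interval link, and closedness comes from blowing up at an interior point $x$ along the scales $r_i=d(x,x_i)$, identifying the limit of the $x_i$ as a density-one point of the cone over a circle, and contradicting $b(x_i)\le\frac12$. The only cosmetic difference is that the paper invokes the previously stated lower semicontinuity of the density under pointed convergence as a black box, whereas you re-derive the needed instance explicitly from Bishop--Gromov monotonicity and lower semicontinuity of the mass of open balls; both are valid.
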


\begin{proof}
	By definition, $x\in \partial X$ if and only if $T_x$ is isometric to $CZ$, where $Z$ is an interval. Since $Z$ has length $b(x)\cdot 2\pi$ and   diameter  at most $\pi$, this implies  $b(x)\leq \frac 1 2$.
	
	% for any $x\in \partial X$.

Let  $x\in X\setminus \partial X$ be arbitrary. Then $T_x$ is a cone over a circle, thus for any point $z$ in $T_x$ but the vertex $0_x$, the density of
$T_x$ at $z$ is $1$.  For any sequence $x_i\in X\setminus \{x \}$ converging to $x$,
we choose $r_i=d(x,x_i)$. Then (possibly after choosing a subsequence) under the convergence of $(\frac 1 {r_i} X,x)$ to $(T_x,0_x)$ the points $x_i$ converge to a point $z$ at distance $1$ from $0_x$.  The semi-continuity of densities implies that
$\lim_{i\to \infty} b(x_i)=1$.  In particular, $x_i$ is not in $\partial X$, for $i$ large enough.

Hence,  $X\setminus \partial X$ must be open.
\end{proof}

As a  consequences of the splitting theorem we obtain:
%and the volume-cone rigidity.

\begin{lem} \label{lem: nogeod}
Let $x\in X$ be a point which is an interior point of a geodesic.
If $x\in \partial X$ then $T_x$ is isometric to the Euclidean half-plane.
If $x\in X\setminus \partial X$ then $T_xX$ is isometric to $\R^2$.
\end{lem}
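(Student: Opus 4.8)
The plan is to blow up $X$ at $x$, so that the geodesic through $x$ becomes a bi-infinite line in the tangent cone, and then to feed this line into the splitting theorem. Write $\gamma\colon [-a,b]\to X$ for the geodesic with $\gamma(0)=x$; since $x$ is an \emph{interior} point of $\gamma$ we have $a,b>0$. Fix a sequence $r_i\to 0$ realizing the tangent cone, so that $(\tfrac 1{r_i}X,x)\to (T_x,0_x)$ in the pointed Gromov--Hausdorff topology. Viewed inside the rescaled space $\tfrac 1{r_i}X$, the curve $\gamma$ is a geodesic segment through the base point of length $a/r_i$ on one side and $b/r_i$ on the other, and both lengths tend to $\infty$. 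First I would verify that, after passing to a subsequence, these segments converge to a complete geodesic line $\ell\colon \R\to T_x$ with $\ell(0)=0_x$. This is the standard fact that geodesics of a fixed length converge to geodesics under Gromov--Hausdorff convergence of proper geodesic spaces, combined with a diagonal argument sending the length to infinity; the local compactness of $X$, and hence of the spaces $\tfrac 1{r_i}X$ and of $T_x$, is exactly what guarantees that the limit line exists and passes through the base point $0_x$.

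Next, $T_x$ is a non-collapsed $RCD(0,2)$ space containing the line $\ell$. By the splitting theorem, $T_x$ splits isometrically as a product
\[
T_x \;\cong\; \R\times W,
\]
where $W$ is a non-collapsed $RCD(0,1)$ space; the $\R$-factor is the line $\ell$, and $W$ is one-dimensional.

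It then remains to identify $W$ using that, by the preceding corollary, $T_x=CZ$ is a Euclidean cone. The one-dimensional non-collapsed $RCD(0,1)$ spaces are $\R$, the half-line $[0,\infty)$, a bounded closed interval, or a circle. A bounded interval would make $T_x$ an infinite flat strip and a circle would make it a flat cylinder; neither of these is invariant under the dilations fixing $0_x$, hence neither is a metric cone. Therefore $W=\R$ or $W=[0,\infty)$, giving either $T_x\cong \R^2$ or $T_x\cong \R\times[0,\infty)$, the Euclidean half-plane.

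Finally I would read off the two cases from the definition of $\partial X$. If $x\in\partial X$, then $T_x$ is homeomorphic to a half-plane, which rules out $T_x\cong\R^2$; hence $W=[0,\infty)$ and $T_x$ is the Euclidean half-plane. If $x\in X\setminus\partial X$, then $T_x$ is homeomorphic to a plane, which rules out the half-plane; hence $W=\R$ and $T_x\cong\R^2$. The one genuinely delicate point, and the step I would treat most carefully, is the first: producing an honest bi-infinite line through the vertex out of the finite geodesic through $x$, i.e.\ checking that the rescaled segments converge to a line rather than degenerating. Everything after the application of the splitting theorem is bookkeeping with the cone structure.
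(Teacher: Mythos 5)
Your proposal is correct and follows essentially the same route as the paper: produce a line in $T_x$ from the rescaled geodesic, apply the splitting theorem of \cite{Gigli}, and then use the fact that $T_x$ is a Euclidean cone over an interval or a circle to force the cross-section to be $\R$ or $[0,\infty)$. You simply spell out in more detail the two steps the paper compresses into single sentences (the convergence of the rescaled segments to a line, and the elimination of the compact cross-sections via the cone structure).
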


\begin{proof}
The assumption implies  that $T_xX$ contains a line (the tangent space to the geodesic through $x$).  By the splitting theorem  \cite{Gigli},  the space $T_x$
splits off a line. This implies the claim, since $T_x$ is a cone over an interval or a circle.
\end{proof}

\section{Almost regular parts}
 For any $\delta >0$, we call a point $x\in X$
a \emph{$\delta$-regular point} if $b(x)>1-\delta$.  By $X^{\delta}$ we denote the set of all $\delta$-regular points in $X$.  We have the following discreteness statement:

\begin{lem} \label{lem: delta}
The set $X^{\delta}$ is open in $X$, for any $\delta>0$.	For any $\delta < \frac 1 2 $, the set $X^{\delta}$ is disjoint from  $\partial X$ and the complement $(X\setminus \partial X) \setminus X^{\delta}$ is discrete in $X\setminus \partial X$.
\end{lem}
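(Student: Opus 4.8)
The plan is to treat the three assertions separately; the first two are immediate from what has already been established, and only the discreteness statement requires a genuine argument.

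First I would deduce the openness of $X^{\delta}$ directly from the lower semi-continuity of the density function $b:X\to (0,1]$ recorded in Section \ref{sec: 2}. Since $X^{\delta}=\{x\in X: b(x)>1-\delta\}$ is exactly a superlevel set of a lower semi-continuous function, it is open, for every $\delta>0$. For the disjointness from $\partial X$ when $\delta<\frac12$, I would simply compare two inequalities: by Lemma \ref{lem: boundclosed} every $x\in\partial X$ satisfies $b(x)\leq\frac12$, whereas every $x\in X^{\delta}$ satisfies $b(x)>1-\delta>\frac12$. These are incompatible, hence $X^{\delta}\cap\partial X=\emptyset$.

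The heart of the matter is the discreteness claim, and here I would argue by contradiction, reusing the blow-up computation from the proof of Lemma \ref{lem: boundclosed}. Suppose some $x\in X\setminus\partial X$ were an accumulation point of $S:=(X\setminus\partial X)\setminus X^{\delta}$, so that there is a sequence of distinct points $x_i\in S$ with $x_i\to x$ and $b(x_i)\leq 1-\delta$ for all $i$. Setting $r_i=d(x,x_i)\to 0$ and passing to a subsequence realizing the tangent cone, I would arrange that $(\frac 1{r_i}X,x)\to (T_x,0_x)$ and that the points $x_i$ converge to a point $z\in T_x$ with $d(0_x,z)=1$. Since $x\notin\partial X$, the tangent cone $T_x=CZ$ is a Euclidean cone over a circle, hence flat and locally isometric to $\R^2$ away from its vertex; in particular the density of $T_x$ at the non-vertex point $z$ equals $1$. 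Re-basing the convergence at $x_i$ and $z$ and invoking the semi-continuity of densities together with the scale-invariance $b_{\frac 1{r_i}X}(x_i)=b(x_i)$, I would obtain $1=b(z)\leq\liminf_i b(x_i)$, forcing $b(x_i)\to 1$ and contradicting $b(x_i)\leq 1-\delta$. Thus $S$ has no accumulation point in $X\setminus\partial X$, i.e.\ it is closed and discrete there.

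The only step requiring real care is the last one. One must ensure that the blow-up at the fixed base point $x$ with scales $r_i=d(x,x_i)$ is indeed the unique tangent cone $T_x$, so that the circle case applies and the density at $z$ is exactly $1$, and one must apply the semi-continuity of densities with the \emph{moving} base points $x_i\to z$ rather than with the fixed base point $x$. Both points are supplied by the convergence theory and the scale-invariance of density quoted in Section \ref{sec: 2}, so no new machinery beyond the proof of Lemma \ref{lem: boundclosed} is needed.
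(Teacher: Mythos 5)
Your argument is correct and follows essentially the same route as the paper: openness from lower semi-continuity of $b$, disjointness from Lemma \ref{lem: boundclosed}, and discreteness by rerunning the blow-up argument from the proof of that lemma to show every point of $X\setminus\partial X$ has a punctured neighborhood in $X^{\delta}$. The care you take with moving base points in the semi-continuity of densities is exactly what the convergence statements of Section \ref{sec: 2} provide, so nothing is missing.
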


\begin{proof}
	The semi-continuity of the density function shows that $X^{\delta}$ is open.
	Due to Lemma \ref{lem: boundclosed}, the set  $X^{\delta}$ is disjoint from
	$\partial X$, for $\delta <\frac 1 2$.
	
	Finally, the last argument in the proof of  Lemma \ref{lem: boundclosed} implies that any point in $X\setminus \partial X$ has a punctured neighborhood completely contained in $X^{\delta}$. This implies the last claim.
\end{proof}

%Due to \cite[Theorem 1.2]{ncRCD} and the Cheeger-Colding-Reifenberg theorem, \cite[Theorem A.1]{CC},  there exists some
%$\delta_1 >0$, such that any point $x\in X^{\delta_1}$ has a neighborhood
% $U_x ^1$  homeomorphic to a $2$-dimensional disc $D$.

The following observation is a very special and rough case of the results
obtained in \cite{CC} and \cite{Mondino-isop}.

% isoperimetric
%inequality proved in \cite[Corollary 1.6]{Mondino-isop}
%However, since
%the connection between perimeter (or Minkowski content, \cite{AGD}) and the measure of the boundary is not clear to us, we provide a proof.

\begin{lem}  \label{lem: mondino}
	There exists some $\delta, C  >0$     with the following property.
	Every point $x\in X^{\delta }$ has a neighborhood  $U_x$ homeomorphic
	to $D$, such that for any subset $K$ of $U_x$ homeomorphic to $\bar D$,
	 \begin{equation} \label{eq: mondino}
	\mathcal H^2 (K) \leq   C\cdot ( \mathcal H^1 (\partial K)) ^2 \,.
	\end{equation}
\end{lem}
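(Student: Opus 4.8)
The plan is to deduce the statement from the two external results referenced just above: the Cheeger--Colding--Reifenberg theorem \cite{CC}, which governs the local topology at points of almost maximal density, and the isoperimetric inequality of \cite{Mondino-isop}, which yields \eqref{eq: mondino} once the length $\mathcal H^1(\partial K)$ is compared with the measure-theoretic perimeter. The smallness of $\delta$ is dictated entirely by the first result; the constant $C$ is then furnished by the second, uniformly in $x\in X^{\delta}$ thanks to the density lower bound $b(x)>1-\delta$.

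First I would fix $\delta>0$ small enough that \cite{CC} applies on $X^{\delta}$. For $x\in X^{\delta}$ the density $b(x)$ is close to $1$, so by the Corollary above the tangent cone $T_x$ is the Euclidean cone over a circle of length $2\pi b(x)$, and is thus Gromov--Hausdorff close to $\R^2$; hence $X$ is almost Euclidean at small scales around $x$. By the Cheeger--Colding--Reifenberg theorem, $X^{\delta}$ is then a topological surface without boundary, so $x$ admits an arbitrarily small neighborhood $U_x$ homeomorphic to $D$. I would in addition require that $U_x$ be contained in a ball of a fixed small radius $r_0=r_0(\kappa)$, at which scale the isoperimetric inequality below holds with a uniform constant.

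For the inequality, let $K\subseteq U_x$ be homeomorphic to $\bar D$; we may assume $\mathcal H^1(\partial K)<\infty$, as otherwise \eqref{eq: mondino} is trivial. The first step is to pass to the perimeter: by the theory of sets of finite perimeter in $RCD$ spaces (which are Ahlfors regular and support a Poincar\'e inequality), finiteness of $\mathcal H^1(\partial K)$ forces $K$ to have finite perimeter, and the De Giorgi--type structure theorem identifies the perimeter as $\mathcal H^1$ of the essential boundary $\partial^* K$. Since $\partial^* K\subseteq \partial K$ (points in the interior or exterior of $K$ have density $1$ or $0$), this gives
\begin{equation*}
\operatorname{Per}(K)=\mathcal H^1(\partial^* K)\leq \mathcal H^1(\partial K) \, .
\end{equation*}
The second step is to invoke the Euclidean-type isoperimetric inequality of \cite{Mondino-isop}: for a set of finite perimeter contained in a ball of radius $r_0$ in the non-collapsed $RCD(\kappa,2)$ space $X$, one has $\mathcal H^2(K)\leq C\cdot(\operatorname{Per}(K))^2$, with $C$ depending only on $\kappa$, the scale $r_0$, and the density lower bound. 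Chaining the two estimates gives \eqref{eq: mondino}.

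The main obstacle is precisely the comparison of the analytic perimeter with the topological boundary length, i.e.\ showing that $\mathcal H^1(\partial K)<\infty$ implies finite perimeter with $\operatorname{Per}(K)\leq \mathcal H^1(\partial K)$; this is where the structure theory of finite-perimeter sets and the rectifiability of essential boundaries in $RCD$ spaces enter, and it is the reason the statement is only a \emph{rough} consequence of \cite{CC} and \cite{Mondino-isop}. A secondary point requiring care is the uniformity of $C$ over all of $X^{\delta}$ when $X$ is non-compact: this follows because the isoperimetric constant in \cite{Mondino-isop} depends only on $\kappa$, the fixed scale $r_0$, and a non-collapsing parameter, each of which is controlled uniformly once $b(x)>1-\delta$.
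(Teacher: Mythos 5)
Your first half (Cheeger--Colding--Reifenberg giving the disc neighborhood once $\delta$ is small) matches the paper. The second half takes a genuinely different route, and it contains a real gap. First, the isoperimetric inequality the paper cites, \cite[Corollary 1.6]{Mondino-isop}, is stated with the \emph{outer Minkowski content} $m(K)$ on the right-hand side, not the perimeter. Since in general one only has $\operatorname{Per}(K)\leq m(K)$, the perimeter form of the inequality is strictly stronger and does not follow by substitution; you would need a separate perimeter-based isoperimetric inequality (true in Ahlfors regular PI spaces via the $(2,1)$-Sobolev inequality, but not what you invoke). Second, and more seriously, the step you yourself flag as ``the main obstacle'' --- that $\mathcal H^1(\partial K)<\infty$ forces $K$ to have finite perimeter with $\operatorname{Per}(K)\leq \mathcal H^1(\partial K)$ --- is asserted rather than proved. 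In $\R^2$ this is Federer's criterion; in the RCD/PI setting it is a nontrivial recent theorem (Federer's characterization in metric measure spaces, together with the identification of the perimeter with the codimension-one measure of the essential boundary), and the relevant codimension-one measure is only \emph{comparable} to, not equal to, the metric $\mathcal H^1$. As written, the chain of inequalities is not established, and the machinery required to establish it is far heavier than the lemma warrants.

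The paper sidesteps perimeter theory entirely: it keeps the Minkowski-content form of \cite{Mondino-isop} and proves the elementary comparison $m(K)\leq 4\pi\cdot \mathcal H^1(\partial K)$ directly. If $\partial K$ is rectifiable of length $L$, then for each $n$ there is an $r$-dense subset $A_n\subset\partial K$ with $n$ points and $r=L/(2n)$; hence $B_r(\partial K)\subset B_{2r}(A_n)$, and Bishop--Gromov gives $\mathcal H^2(B_{2r}(A_n))\leq n\cdot 2\pi (2r)^2=4\pi r L$ for $r$ small, so dividing by $r$ and letting $n\to\infty$ yields $m(K)\leq 4\pi L$. This two-line covering argument replaces your appeal to the structure theory of finite-perimeter sets. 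If you wish to keep the perimeter route you must supply the Federer-type implication and a perimeter-based isoperimetric inequality with uniform local constants; both exist in the literature, but neither is in the reference you cite.
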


  \begin{proof}
  	Due to \cite[Theorem 1.2]{ncRCD} and the Cheeger--Colding--Reifenberg theorem, \cite[Theorem A.1]{CC},  there exists some
  	$\delta >0$, such that any point $x\in X^{\delta}$ has a neighborhood
  	 $U_x$  homeomorphic to  $D$.

  It remains to prove, that, for sufficiently small $\delta$, we have the isoperimetric inequality \eqref{eq: mondino}, for any
   closed topological disc $K$ in some neighborhood of any point in $X^{\delta}$.

  	This  statement is proved   in \cite[Corollary 1.6]{Mondino-isop}
  	(in a much more general and precise form) with $ \mathcal H^1 (\partial K)^2$
  	on the right hand side of \eqref{eq: mondino} replaced by   $m^2(K)$, where   $m(K)$ is
  	the \emph{outer Minkowski content}
  	$$m(K):=\liminf _{r\to 0} \frac {\mathcal H^2(B_r(K))- \mathcal H^2(K)} {r}\, .$$
  	Here $B_r(K)$ denotes the set of  points  with distance at most $r$
  	to $K$.
  	
   It suffices to prove $4\pi \cdot   \mathcal H^1 (\partial K) \geq m(K)$
  	for any subset $K$ of $X$ homeomorphic to a closed disc, compare \cite[Theorem 3.2.39]{Federer}.
		
		If $\partial K$ is not rectifiable there is nothing to prove.
  	Otherwise, for any natural $n$,  we  set $r=\frac 1 {2n} \cdot \mathcal H ^1 (\partial K)$ and find an $r$-dense subset  $A_n$ in   $\partial K$  with $n$ points.
  	Then $B_r(\partial K) \subset B_{2r } (A_n)$.
  	
  	By the Bishop--Gromov property, we deduce, for $r$ small enough,
  	$$\mathcal H^2 (B_r (\partial K )) \leq \mathcal H^2 (B_{2r } (A_n)) \leq n\cdot  2\cdot \pi \cdot (2r)^2 =4\pi \cdot r \cdot \mathcal H^1 (\partial K) \, .$$
  	This implies $m(K)  \leq 4\pi \cdot \mathcal H^{1} (\partial K)$ and finishes the proof.
  \end{proof}

 \section{Conformal parametrization} \label{sec: 4}
\subsection{Choice of a domain}  Let $\delta, C>0$ be as in Lemma \ref{lem: mondino}, let $x_0\in X^{\delta}$ be arbitrary and let $U_{x_0}$ be an  open neighborhood of $x_0$ in $X$ provided by  Lemma \ref{lem: mondino}.

Any Jordan curve $\Gamma$ in $U_{x_0}$ determines a \emph{Jordan domain}
$\Omega \subset U_{x_0}$ homeomorphic to $D$, such that $\bar \Omega =\Omega \cup \Gamma$ is homeomorphic to $\bar D$.
% and is a disjoint union of $\Omega$ and $\Gamma$.

Starting with any  Jordan curve  $\Gamma$ in $U_{x_0}$ whose Jordan domain
$\Omega$ contains $x_0$, we can replace $\Gamma$ by a nearby curve and assume that $\Gamma$ is biLipschitz to the round circle $S^1$, see \cite[Lemma 4.3]{LW-parameter}.

We fix this curve $\Gamma$ and domain $\Omega$ for the rest of the section.

\subsection{Metric properties of $\Omega$}
Consider the set $\bar \Omega$ with its intrinsic metric $d_{\Omega}$.
Since $\Gamma$ is a biLipschitz embedding of a circle, the metric $d_{\Omega}$
is biLipschitz to the induced metric $d_X$ on $\bar \Omega$. Moreover,
$d_{\Omega}$ and $d_X$ locally coincide on $\Omega$.

In order to apply the parametrization results of \cite{LW-parameter}, we need to make sure that
$\bar \Omega$ is Ahlfors $2$-regular and is
 \emph{linearly locally connected} in the following sense.

%We recall the notion of \emph{linearly locally connectedness} simplified to the case of geodesic spaces, see \cite[p. 5]{LW-parameter}.

A \emph{continuum} is a compact connected space.  A  metric  space $Y$ is called \emph{linearly locally connected} if there exists a positive constant $0<C<1$, such that for all $y\in Y$ and all $0<r<\diam (Y)$ the following holds true.  Any pair of points $z_1,z_2\in B_{C\cdot r} (y)$ is contained in a continuum $P \subset B_{r} (y)$; any  pair of points $z_1,z_2 \in Y\setminus B_r(y)$  is contained in a continuum $P\subset Y\setminus B_{C\cdot r}$.

% by a curve.
If the space $Y$ is geodesic, the first condition is always satisfied. Linear local
connectedness is preserved under biLipschitz transformations.

\begin{lem} \label{lem: ahllin}
The space
 $(\bar \Omega, d_{\Omega})$ is $2$-Ahlfors regular and linearly locally connected.
\end{lem}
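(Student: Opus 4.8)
The plan is to establish the two Ahlfors regularity bounds and the two linear local connectedness conditions separately, leveraging the biLipschitz equivalence of $d_\Omega$ and $d_X$ on $\bar\Omega$ to transfer properties from the ambient space $X$. Since both properties are preserved under biLipschitz transformations, and the two metrics are biLipschitz equivalent on the compact set $\bar\Omega$, it suffices to verify everything for the induced metric $d_X$.

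First I would treat Ahlfors regularity. The space $X$ is locally $2$-Ahlfors regular by the Bishop--Gromov property, so there exists $L>0$ with $\frac{1}{L}r^2 \leq \mathcal H^2(B_r(x)) \leq L r^2$ for all $x$ in the compact set $\bar\Omega$ and all small $r$. The upper bound transfers immediately to $\bar\Omega$ since $\mathcal H^2(B^{\bar\Omega}_r(x)) \leq \mathcal H^2(B_r^X(x))$. For the lower bound, the subtlety is that a ball in $X$ centered at a boundary point of $\Omega$ might stick out of $\bar\Omega$, so one cannot directly conclude $\mathcal H^2(B^{\bar\Omega}_r(x)) \geq \frac{1}{L}r^2$. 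The remedy is that $\Gamma$ is biLipschitz to a circle and $\Omega$ is a Jordan domain: near any point of $\bar\Omega$ a definite proportion of a small $X$-ball lies inside $\bar\Omega$, which yields a lower volume bound with a possibly worse constant. This gives $2$-Ahlfors regularity of $(\bar\Omega, d_X)$, hence of $(\bar\Omega, d_\Omega)$.

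Next I would verify linear local connectedness. Since $(\bar\Omega, d_\Omega)$ is geodesic, the first condition (joining nearby points by a small continuum) is automatic, as the excerpt notes. The real work is the second condition: any two points outside a ball $B_r(y)$ must be joined by a continuum avoiding $B_{Cr}(y)$. Here I expect the main obstacle to lie. The idea is that $X$ itself, being a geodesic space whose tangent cones are cones over intervals or circles, is locally linearly connected away from thin sets; more concretely one uses that $X$ is a topological surface near $X^\delta$ (from Lemma \ref{lem: mondino}) together with the Ahlfors regularity to produce connecting continua of controlled size. One argues that a ball of radius $r$ has a complement that is connected at scale $Cr$, using the surface topology to route paths around the removed ball while the Ahlfors lower bound prevents the complement from being pinched off.

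The cleanest route is probably to invoke that $\bar\Omega$, being a closed topological disc that is $2$-Ahlfors regular and geodesic, automatically satisfies linear local connectedness, a statement which for such planar-like Ahlfors regular surfaces follows from a covering and volume-counting argument entirely analogous to the one used in the proof of Lemma \ref{lem: mondino}. Thus the hard step is really a quantitative non-pinching estimate: showing the annular region $B_r(y)\setminus B_{Cr}(y)$ cannot separate $\bar\Omega$, which I would derive from the two-sided volume bounds together with the isoperimetric-type control already available in $U_{x_0}$.
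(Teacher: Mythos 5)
Your treatment of Ahlfors regularity follows the paper's route: the upper bound transfers directly because the Hausdorff measures for $d_X$ and $d_\Omega$ agree, and the lower bound at boundary points is obtained by locating, inside $B_{2r}(z)$, a point of $\Omega$ at distance $\gtrsim r$ from $\Gamma$ and applying the ambient lower bound there. You assert that ``a definite proportion of a small $X$-ball lies inside $\bar\Omega$'' without saying why; the paper supplies the missing step (the distance sphere $S_r(z)$ contains a continuum joining two points of $\Gamma$ locally separated by $z$, and the biLipschitz property of $\Gamma$ forces this continuum to reach depth $C_0 r$ into $\Omega$), but your outline is the same argument and I would count it as essentially correct.

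The linear local connectedness part has a genuine gap. You claim that a geodesic, $2$-Ahlfors regular closed topological disc is ``automatically'' LLC, via ``a covering and volume-counting argument entirely analogous to'' the proof of Lemma \ref{lem: mondino}. No such argument exists: volume bounds control measure, not connectivity at definite scales, and LLC is precisely the condition that must be \emph{added} to Ahlfors $2$-regularity in the uniformization theory of Bonk--Kleiner and of \cite{LW-parameter} --- it is not implied by it. The second LLC condition (points outside $B_r(y)$ joined by a continuum avoiding $B_{Cr}(y)$) is a statement about the absence of small separating sets, and the only quantitative input in the paper that rules these out is the isoperimetric inequality \eqref{eq: mondino}. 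Concretely, the paper's proof runs as follows: by \cite[Theorem 8.6]{LW-intrinsic}, inequality \eqref{eq: mondino} yields a constant $C_1>1$ such that every ball $B_r(y)$ is contractible inside $B_{C_1 r}(y)$; since $\bar\Omega$ is geodesic and $\Gamma$ is LLC, it suffices to connect every $z\in\bar\Omega\setminus B_r(y)$ to $\Gamma$ outside $B_{C_2 r}(y)$; and if this failed, the Jordan curve theorem would produce a Jordan curve in $B_{C_2 r}(y)$ whose Jordan domain contains $z$, contradicting contractibility of $B_{C_2 r}(y)$ in $B_r(y)$ for $C_2$ small. Your proposal contains neither the contractibility input nor the reduction to connecting points to $\Gamma$ nor the Jordan curve argument; the one sentence gesturing at ``isoperimetric-type control'' is not connected to any mechanism. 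As written, the LLC half of the lemma is not proved.
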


\begin{proof}
Since the $\mathcal H^2$-measures with respect to $d_X$ and $d_{\Omega}$ coincide on $\bar \Omega$, a quadratic upper bound on the $\mathcal H^2$-area of balls
in $\Omega$ follows from the corresponding upper bound on the area of
balls in $X$.

The existence of a lower quadratic bound on the area of balls in $\bar \Omega$
is essentially proved in \cite[Theorem 9.4]{LW-intrinsic}, as a consequence
of the quadratic isoperimetric inequality \eqref{eq: mondino}. We provide a simplified
version of the argument here.

Relying on the lower bound on the area of balls  in $X$,   it is
sufficient to find a constant $C_0 < 1$ with the following property: For all small $r$ and any $z\in \Gamma$ the ball
$B_{2r}(z)$ contains a point $y \in \Omega$ with distance at least $C_0 \cdot r$ to
$\Gamma$.

By topological reasons, the distance sphere of radius $r$ around $z$
must contain a continuum $P$, joining  two points on $\Gamma$, locally separated on $\Gamma$ by $z$.  From the biLipschitz property of the boundary curve
$\Gamma$, we now deduce the existence of $C_0<1$, such that the continuum $P$  contains a point $y$ as claimed above.

This finishes the proof of the Ahlfors $2$-regularity of $\bar \Omega$.

In order to prove that $\bar \Omega$ is linearly locally connected, recall first from  \cite[Theorem 8.6]{LW-intrinsic}, that the  isoperimetric property \eqref{eq: mondino} implies the following statement.
There exists  a constant $C_1>1$ such that,  for any $y\in \bar \Omega$ and any
$r>0$, the ball $B_r (y)$ is contractible inside  $B_{C_1\cdot r} (y)$.

The space
$\bar \Omega$ is geodesic and the boundary $\Gamma$ is linearly locally connected.
Thus it  suffices to prove the following claim. There exists some $C_2<1$, such that,  for every $y\in \bar \Omega$, every $r< \diam (\bar \Omega)$ and every  $z\in \bar \Omega \setminus B_r(y)$
there is a curve connecting $z$ with $\Gamma$ outside of $B_{C_2\cdot r} (y)$.

 Assume the contrary. By the Jordan curve theorem, there exists
 a Jordan curve $T$ in $B_{C_2 \cdot r} (y)$, whose Jordan domain contains the point $z$.  But this implies that $B_{C_2\cdot r} (y)$ is not contractible
in $B_r (y)$, which provides a contradiction to the result \cite[Theorem 8.6]{LW-intrinsic}, mentioned above, once $C_2$ is sufficiently small.

This finishes the proof of the Lemma.
\end{proof}

 \subsection{Moduli of families of curves and Newton-Sobolev maps} For the convenience of the reader,
we recall the notions of moduli of families of curves and of Newton--Sobolev maps with values in a metric space, \cite[Sections 5-7]{HKST}, in the  special case
used here.

Let $Y$ be a metric spaces with finite $\mathcal H^2 (Y)$.
% and let $p>1$ be fixed.
%
For a family of curves $\mathcal{C}$ in $Y$, a Borel function $\sigma:Y\to [0,\infty]$ is called \emph{admissible for $\mathcal{C}$}
if $\int _{\gamma} \sigma  \geq 1$ for every locally rectifiable curve in $\mathcal{C}$. The \emph{modulus}  (more precisely the $2$-modulus) of the family   is  defined as
$$\mod (\mathcal{C}):=\inf_{\sigma} \int _Y \sigma ^2 \, d\mathcal H^2 \;,$$
where the infimum is taken over all Borel functions admissible for
$\mathcal{C}$.

 A statement holds for \emph{almost every curve}  in $Y$, if the family $\mathcal{C}  $ of all curves in $Y$  for which the statement does not hold has modulus $0$.

A measurable map $u:Y\to Z$ into a separable  metric space $Z$ is contained in the Newton--Sobolev space $N^{1,2} (Y,Z)$ if  for some
$z\in Z$ the composition $ d_z \circ u :Y\to \R$ is contained in $ L^2 (Y)$
and the following statement holds true.  There exists a function $\rho \in L^2 (Y)$,
called a \emph{weak upper gradient} of $u$, such that for almost any curve $\gamma$ in $Y$ the composition $u\circ \gamma$ is absolutely continuous and
\begin{equation} \label{eq: defns}
\ell (u\circ \gamma ) \leq  \int _{\gamma} \rho \;.
\end{equation}
There is a unique \emph{weak minimal upper gradient} $\rho _u \in L^2 (Y)$ of $u$
such that $\rho _u \leq \rho$ almost everywhere, for any \emph{weak upper gradient $\rho$} of $u$.
$$Ch (u) = Ch_Y ^2 (u):= \frac 1 2 \int _Y \rho _u ^2  \; d\mathcal H^2$$
is called the \emph{Cheeger energy}  of  $u$. In \cite{LW-parameter}, \cite{LW}
the equivalent notion of  \emph{Reshetnyak} energy $E_+ ^2 (u)= 2 \cdot Ch (u)$ has  been used.

\subsection{Canonical parametrization}
We will not recall the definition of \emph{quasisymmetric maps}, see \cite{HK}. Instead we will
use the theory of quasisymmetric maps as a black-box, providing references for each required
statement.

 From \cite[Theorem 1.1]{LW-parameter} and Lemma \ref{lem: ahllin} above we deduce:

\begin{cor}
Among all homeomorphisms $u:\bar D\to \bar \Omega$ there exist a homeomorphism
$\phi \in N^{1,2} (\bar D, \bar \Omega)$ with  minimal  Cheeger energy  $Ch _{\bar \Omega}  (\phi)$.
This homeomorphism $\phi$ is quasisymmetric.
\end{cor}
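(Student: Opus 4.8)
The plan is to read the corollary as a direct instance of the canonical parametrization theorem of Lytchak--Wenger, \cite[Theorem 1.1]{LW-parameter}. That theorem takes as input a metric space homeomorphic to $\bar D$ which is Ahlfors $2$-regular and linearly locally connected, and produces, among all homeomorphisms from $\bar D$ onto it, an energy-minimizing one, which moreover is quasisymmetric. Thus essentially all of the geometric content -- the existence of the minimizer, the fact that the minimizer is a genuine homeomorphism, and its quasisymmetry -- is carried by the cited theorem, and our only task is to check that its hypotheses are met and that its energy functional matches the one in the statement.

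To verify the hypotheses, I first note that, by the construction of the Jordan domain at the start of this section, the space $\bar \Omega = \Omega \cup \Gamma$, equipped with its intrinsic metric $d_{\Omega}$, is homeomorphic to $\bar D$. Lemma \ref{lem: ahllin} then supplies exactly the two remaining structural assumptions: $(\bar\Omega, d_{\Omega})$ is $2$-Ahlfors regular and linearly locally connected. Since the cited theorem already produces at least one homeomorphism of finite energy, the class of Sobolev homeomorphisms $\bar D \to \bar\Omega$ is non-empty and the infimum of the energy over this class is finite, so a minimum over homeomorphisms is meaningful.

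It remains only to reconcile the energy normalizations. As recorded above, the Reshetnyak energy used in \cite{LW-parameter} and the Cheeger energy are related by $E_+^2(u) = 2\cdot Ch(u)$; hence a homeomorphism minimizes $Ch_{\bar\Omega}$ if and only if it minimizes $E_+^2$. The energy-minimizing quasisymmetric homeomorphism furnished by \cite[Theorem 1.1]{LW-parameter} is therefore exactly the desired map $\phi$. The one point that requires care on our side is this matching of the minimization class and the energy constant; everything substantive is provided by the black-box theorem, so there is no essential obstacle once Lemma \ref{lem: ahllin} is in hand.
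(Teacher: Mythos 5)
Your proposal is correct and follows exactly the paper's route: the corollary is deduced directly from \cite[Theorem 1.1]{LW-parameter} once Lemma \ref{lem: ahllin} supplies Ahlfors $2$-regularity and linear local connectedness, with the observation $E_+^2(u)=2\cdot Ch(u)$ reconciling the energy normalizations. Your write-up merely makes explicit the hypothesis-checking that the paper leaves implicit.
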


The inverse $\phi ^{-1} :\bar \Omega \to \bar D$  is quasisymmetric as well,
\cite[Proposition 10.6]{Hein}.  Since the disc $\bar D$ satisfies the \emph{$1$-Poincar\'e inequality}, \cite[Section 8]{HKST},  the space  $\bar \Omega$ satisfies the $q$-Poincar\'e inequality for some $q<2$, \cite[Theorem 2.3]{Koskela}.

 The map $\phi$ and  its inverse have \emph{Lusin's property $N$}, thus $\phi$ and $\phi ^{-1}$ preserve the class of sets of $\mathcal H^2$-measure $0$, \cite[Theorem 8.12]{Shan}.

 Due to \cite[Theorem 9.3]{HK},
% there exists some $p>2$ such that
$\phi\in N^{1,2} (\bar D , \bar \Omega)$ and
 $\phi^{-1}  \in N^{1,2} (\bar \Omega,  \bar D)$.
%Due to Morrey's embedding theorem, \cite[Theorem 9.2.14]{HKST}, the maps
%$\phi, \phi ^{-1}$ are H\"older continuous.
By \cite[Theorem 9.8]{Shan},
%there exists some constant $L_1 \geq 1$  such that for any family  of curves $\Gamma$ in $\bar D$ we have
%\begin{equation}  \label{eq:modulus}
%L_1\cdot mod_2 (\Gamma )\geq  mod_2 (u\circ \Gamma)
%\geq \frac 1 {L_1} \cdot mod_2 (\Gamma) \,
%\end{equation}
%Here $u\circ \Gamma$ denotes the family of curves $\{u\circ \gamma \, : \, %\gamma \in \Gamma \}$  in $\bar \Omega$.
% In particular,
a family $\mathcal{C}$ of curves in $\bar D$ has modulus zero   if and only if the image $u\circ \mathcal{C}$  of this family
has modulus $0$ in $\bar \Omega$.

\subsection{Conformality and its consequences}
No tangent space of the space $X$ contains a non-Euclidean  normed vector space. Due to \cite[Proposition 11.2]{LW}, $X$ satisfies the property  ET, introduced in \cite[Definition 11.1]{LW}.  The metric  on $\Omega$ locally coincides with $d_X$. Since $\Gamma$ has vanishing $\mathcal H^2$-measure,
the space $\bar \Omega$ has the property ET as well.

Due to  \cite[Theorem 11.3]{LW}, the energy minimizer $\phi$   is a conformal map in the sense of \cite[Definition 6.1]{LW}, meaning that
almost all \emph{approximate metric differentials} of $\phi$ are multiplies of the (fixed) Euclidean norm on $\R^2$. Using \cite[Lemma 3.1]{LW-intrinsic} this reads as
 \begin{equation} \label{eq:length}
 \ell _{\Omega} (\phi \circ \gamma)= \int _{\gamma}  \rho _{\phi} \, ,
 \end{equation}
 for almost all curves $\gamma$ in $D$. Here, as before, $\rho _{\phi}$ denotes a weak minimal upper gradient of $\phi$.

% Indeed, since $\phi$ is contained in $W^{1,p} (\bar D, X)$, for some $p>2$, \cite[Theorem 3.1]{LW-harm}, the last equality also holds for $p$-almost all curves in
% $\bar D$.

 For any Borel subset $E\subset  \bar D$ we have, \cite[Lemma 3.3]{LW-intrinsic}
 \begin{equation} \label{eq: areachange}
 \mathcal H^2 _X (\phi (E))= \int _E \rho _{\phi} ^2   \, d\mathcal H^2 _{\bar D} \, .
 \end{equation}
 Since the inverse $\phi^{-1}$ has Lusin's property $N$, the weak minimal
 upper  gradient $\rho _{\phi}$ must be  positive almost everywhere.

 From \eqref{eq:length},  \cite[Lemma 3.1]{LW-intrinsic}  and the absolute continuity on
almost all curves of the
Sobolev maps $\phi, \phi ^{-1}$, \cite[Proposition 6.3.2]{HKST}, we deduce  that for any non-negative Borel function $h:\bar D\to \R$ and almost every curve $\gamma$ in $\bar D$
\begin{equation} \label{eq: h}
 \int _{\phi \circ \gamma } h \circ \phi ^{-1} = \int _{\gamma} \rho _{\phi} \cdot h \, .
 \end{equation}
Therefore, the Borel function
$$g:=\frac 1 {\rho _{\phi} \circ \phi ^{-1}} :\bar \Omega \to [0,\infty] $$
satisfies
$$\int _{\eta} g = \ell _{\bar D}(\phi ^{-1} \circ \eta )  \, ,$$
for almost every curve $\eta$ in $\bar \Omega$.  This implies that
$g$ is a weak minimal upper gradient of $\phi ^{-1}\in N^{1,2} (\bar \Omega, \bar D)$.

%For the number $p>2$ provided  above by \cite[Theorem 9.3]{HK},
%we have $\phi \in N^{1,p} (\bar D, \bar \Omega)$ and $\phi ^{-1} \in N^{1,p}
%(\bar \Omega, \bar D)$.  Since $\bar D$ and $\bar \Omega$ are $2$-Ahlfors regular and satisfy the $2$-Poincar\'e inequality, the $q$-weak minimal upper
%gradients of $\phi$ (respectively of $\phi ^{-1}$  coincide for $q=2$ and for $q=p$, see \cite[Section 11.3]{ACD}, or \cite[Theorem 8.2.1, Theorem 13.5.1]{HKST}.  Therefore, $\rho _{\phi} \in L^p (\bar D)$ and
%$g$ is contained in $L^p (\bar \Omega)$.

%The regularity results in \cite{LW} imply that (after possibly restricting to a smaller subdisk) we may assume that $f\in L^p (D)$ for some $p>2$.

%\subsection{Equality of moduli}
%Due to \eqref{eq:modulus}, the function $f$ is almost everywhere positive.

%We deduce from \eqref{eq:length} that $\frac 1 f$ is the weak minimal upper gradient of the inverse map $\phi^{-1}$.  Thus the inverse $\frac 1 f$ is contained in $L^p (\bar \Omega)$. Therefore, theres exists some $\epsilon =p-2>0$ such that $ f^{-\epsilon}$ is contained in $L^1 (\bar D)$.

\section{Conformal factor } \label{sec: 5}
\subsection{Setting and aim}
We continue to use the notations from the previous section. Thus, we have a domain $\Omega \subset  X$ and a  conformal homeomorphism $\phi :\bar D\to \bar \Omega$, which is contained   in $N^{1,2} (\bar D,\bar \Omega)$.  We let $f:=\rho _{\phi}:\bar D\to [0,\infty]$ be a weak minimal upper gradient of $\phi$.

The aim of this section is the following result and its consequence, Corollary \ref{cor: conclusion}.

\begin{prop} \label{prop: main}
	The   function $f^{-2}$ is contained in $L^{\infty} _{loc} (D)$.
	For any harmonic function $v:D\to \R$ the distributional Laplacian
	$\Delta _D (f^{-2}  |\nabla v|^2)$ is a Radon measure on $D$
	and satisfies  as a  measure
	\begin{equation} \label{eq: proposition}
	\Delta _D (f^{-2} \cdot |\nabla v|^2) \geq  2\kappa \cdot |\nabla v|^2 \cdot \mathcal H^2_D\, .
	\end{equation}
\end{prop}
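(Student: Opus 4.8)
The plan is to transport the Bakry--\'Emery (Bochner) inequality from $\Omega$ to the flat disc $D$ through the conformal map $\phi$, the only serious work being to set up the correct dictionary between the two Dirichlet energies. First I would upgrade the infinitesimal conformality recorded in \eqref{eq:length}--\eqref{eq: areachange} to an identification of energies. Using the length formula \eqref{eq:length}, the area formula \eqref{eq: areachange}, the absolute continuity of $\phi,\phi^{-1}$ on almost every curve, and the fact that $\phi$ preserves the class of curve families of modulus zero, one checks that for $w\in N^{1,2}(\bar\Omega)$ the minimal weak upper gradients satisfy $\rho_{w\circ\phi}=(\rho_w\circ\phi)\cdot f$ almost everywhere on $D$; combined with \eqref{eq: areachange} this gives $Ch_{\bar D}(w\circ\phi)=Ch_{\bar\Omega}(w)$, so that precomposition with $\phi$ is an isomorphism between the Cheeger energy of $\bar\Omega$ and the flat Dirichlet energy of $\bar D$. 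Since both spaces are infinitesimally Hilbertian, this isomorphism preserves harmonic functions, carries the energy density across via $|\nabla_\Omega w|^2\circ\phi=f^{-2}\,|\nabla(w\circ\phi)|^2$, and, by testing the defining Green identities, transfers measure-valued Laplacians: if $\mLaplace_\Omega H$ is a Radon measure then $H\circ\phi$ has distributional flat Laplacian $\Delta_D(H\circ\phi)=(\phi^{-1})_{\#}(\mLaplace_\Omega H)$.

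For the claim $f^{-2}\in L^\infty_{loc}(D)$ I would apply this dictionary to the two flat coordinate functions $x_1,x_2$ on $D$, which are harmonic. Their $\Omega$-counterparts are exactly the components of $\phi^{-1}$, which are therefore harmonic on $\Omega$. By the local Lipschitz regularity of harmonic functions on $RCD$ spaces, applicable on $\Omega$ since $\Omega$ is locally isometric to $X$, the map $\phi^{-1}$ is locally Lipschitz, so its minimal weak upper gradient $g=1/(f\circ\phi^{-1})$ lies in $L^\infty_{loc}(\Omega)$. As $g\circ\phi=f^{-1}$, this is precisely $f^{-2}\in L^\infty_{loc}(D)$, and in particular $f^{-2}|\nabla v|^2\in L^\infty_{loc}(D)$ for every harmonic $v$, so that the distributional Laplacian in \eqref{eq: proposition} is well defined.

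To obtain the inequality, given $v$ harmonic on $D$ I set $u=v\circ\phi^{-1}$, which is harmonic on $\Omega$ by the first step. The Bakry--\'Emery property of the $RCD(\kappa,2)$ space, i.e.\ the Bochner inequality with $N=2$ applied to $u$ so that the $(\mLaplace_\Omega u)^2/2$ term drops, yields that $|\nabla_\Omega u|^2$ has a measure-valued Laplacian with $\tfrac12\,\mLaplace_\Omega|\nabla_\Omega u|^2\ge \kappa\,|\nabla_\Omega u|^2\,\mathcal H^2_\Omega$. Writing $H=|\nabla_\Omega u|^2$, so that $h:=H\circ\phi=f^{-2}|\nabla v|^2$ by the energy-density identity above, I push this inequality to $D$ using the Laplacian transfer and the area formula \eqref{eq: areachange}, which converts the right-hand density $H\,\mathcal H^2_\Omega$ into $h f^2\,\mathcal H^2_D=|\nabla v|^2\,\mathcal H^2_D$. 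Testing against an arbitrary nonnegative $\varphi\in C_c^\infty(D)$ and setting $\psi=\varphi\circ\phi^{-1}$ gives
\[
\int_D \varphi\,d\bigl(\Delta_D(f^{-2}|\nabla v|^2)\bigr)\ \ge\ 2\kappa\int_D \varphi\,|\nabla v|^2\,d\mathcal H^2_D .
\]
Hence the distribution $\Delta_D(f^{-2}|\nabla v|^2)-2\kappa|\nabla v|^2\,\mathcal H^2_D$ is nonnegative, so by the Riesz representation theorem it is a nonnegative Radon measure; in particular $\Delta_D(f^{-2}|\nabla v|^2)$ is a Radon measure and \eqref{eq: proposition} holds.

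The main obstacle is the first step: making the conformal invariance of the Dirichlet energy, and above all the transfer of the measure-valued Laplacian, fully rigorous in the merely $N^{1,2}$ and quasisymmetric setting. This demands careful use of the almost-every-curve calculus together with the modulus preservation of $\phi$, and it requires checking that $|\nabla_\Omega u|^2$ genuinely lies in the domain of the measure-valued Laplacian, so that the Bochner estimate is available as an inequality of measures rather than only of distributions.
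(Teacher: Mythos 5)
Your proposal follows essentially the same route as the paper: conformal invariance of the Cheeger energy under $\phi$ identifies Dirichlet forms and measure-valued Laplacians on $D$ and $\Omega$, the flat coordinate functions give harmonicity of $\phi^{-1}$ and hence (via Lipschitz regularity of harmonic functions on RCD spaces) local boundedness of $f^{-2}$, and the Bochner inequality applied to $u=v\circ\phi^{-1}$ with $\mLaplace_\Omega u=0$ is pushed forward to yield \eqref{eq: proposition}. The one technical point you flag but do not resolve --- that $|\nabla_\Omega u|^2$ genuinely lies in the domain of the measure-valued Laplacian so the Bochner estimate applies --- is handled in the paper by multiplying $u$ with a cut-off test function supported in $\Omega$ and equal to $1$ on a smaller domain $O$, reducing to the case of test functions on $X$.
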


The proof will be a direct consequence of the Bakry-\'Emery inequality
on the $RCD(\kappa, 2)$ space $X$, once we have identified via the conformal homeomorphism $\phi$ the Sobolev spaces and Laplacians on $D$ with the corresponding objects on $X$.

\subsection{Identifications}
Whenever no confusion is possible we will use the homeomorphism $\phi ^{-1}$ to
identify $\bar \Omega $ with $\bar D$.

Due to  \eqref{eq: areachange}, under this identification we have
\begin{equation} \label{eq: areachange2}
\mathcal H^2_X |_{\Omega} = \mathcal H^2_\Omega  = f^2 \cdot \mathcal H^2 _D\, .
\end{equation}
%For a fixed number $p>2$, we have $f\in L^p (\bar D)$ and
%$\frac 1 f \in L^p (\bar D, \mathcal H^2 _{\bar \Omega})$.
%The last statement reads as $f^{2-p} \in L^1 (\bar D)$.

%\subsection{Spaces of Sobolev functions}

We are going to identify the space of Sobolev functions $N^{1,2} (\Omega) =N^{1,2} (\Omega, \R)$ with the "classical" space $N^{1,2} (  D)= W^{1,2} (D)$.

From \eqref{eq: h}  we  draw the following conclusion.
Let $u: \Omega \to \R$  be measurable. A Borel function
 $\rho:\Omega \to [0,\infty]$ is a weak upper gradient of $u$, thus
 satisfies  \eqref{eq: defns} for almost every curve $\gamma $ in $\Omega$,
 if and only if $(\rho \circ \phi) \cdot f : D\to [0,\infty] $ is a weak upper gradient of
 the composition $u\circ \phi : D \to \R$.

 Due to \eqref{eq: areachange2}, $\rho\in L^2 ( \Omega)$ if and only if
 $ (\rho \circ \phi) \cdot f \in L^2 ( D)$.

  Since $\bar D$ and $\bar \Omega$ satisfy the $2$-Poincar\'e inequality, the
 $2$-integrability of a weak upper gradient implies that the function itself
 is $2$-integrable, \cite[Lemma 8.1.5, Theorem 9.1.2]{HKST}.

 This shows that a map $v:D \to \R$ is contained in
 $N^{1,2} (D)$ if and only if $u:=v\circ \phi^{-1}$ is contained in $N^{1,2} ( \Omega)$.
 Moreover, in this case  the weak minimal upper gradients satisfy
 \begin{equation} \label{eq: gradients}
 \rho _{u}  = \frac  {\rho_v} {f}\circ \phi ^{-1} =\frac { |\nabla v|} f\circ \phi^{-1}\, .
 \end{equation}
 Combining with \eqref{eq: areachange2} and identifying $D$ and $\Omega$ this shows
 \begin{equation} \label{eq: equalitym}
 |\nabla v|^2 \cdot \mathcal H^2 _D = \rho_u ^2\cdot \mathcal H^2_{\Omega} \; \; \text {and} \; \; Ch_{\Omega} (u) = Ch_{D} (v).
 \end{equation}

% The space $N^{1,2} (\bar D) $ is (canonically identified with) the classical Sobolev space $W^{1,2} (D)$ and under this identification, for almost all $x\in D$,
%$$\rho_{u\circ \phi} (x) =| \nabla (u\circ \phi) (x)| \,$$
%where $\nabla (u\circ \phi) \in L^2 (\bar D, \R^2)$ is the usual distributional gradient of a Sobolev function.

 \subsection{Laplacians}
By the RCD property, the spaces $X$ and $\R^2$ are \emph{infinitesimally Hilbertian},
meaning that $Ch_X$ and $Ch_{\R^2}$ are quadratic forms on $N^{1,2} (X)$ and
$N^{1,2} (\R^2)$, respectively. Thus, $D$ and $\Omega$ are infinitesimally Hilbertian as well, \cite[Proposition 4.22]{Gigli-memoirs}.

For $Y=X,\Omega, D, \R^2$ we consider the corresponding bilinear forms $\mathcal E_Y:N^{1,2} (Y)\times N^{1,2} (Y)\to \R$, called  \emph{Dirichlet forms},
$$\mathcal E _{Y}  (u_1,u_2) =\frac 1 2 (Ch_{Y} (u_1+u_2)^2 -Ch_{Y} (u_1-u_2)^2) \,.$$
From \eqref{eq: equalitym} we see that for $v_{1,2}\in N^{1,2} (D)$ and $u_{1,2}=v_{1,2}\circ \phi^{-1} \in N^{1,2} (\Omega)$
\begin{equation} \label{eq: dirichlet}
\mathcal E_{\Omega} (u_1,u_2 )= \mathcal E _D (v_1,v_2) \,.
\end{equation}
For $Y =D,\Omega$,  a function $u\in N^{1,2} (Y)$ is in the domain
of the (measure valued) Laplacian on $Y$  if there exists a Radon measure
$\nu$ on $Y$ with the following property, \cite[Definition 4.4, Proposition 4.7, Lemma 4.26]{Gigli-memoirs}: For all $w\in N^{1,2} (Y)$ which are
continuous and have compact support in $Y$ the equality
$$\mathcal E_Y (u,w)=- \int _Y w \; d\nu \, $$
%holds true.
holds true.  In this case, we set  $ \mLaplace _{Y} (u):= \nu$.

From \eqref{eq: dirichlet}, a function $u\in N^{1,2} (\Omega )$ is  contained in
the domain of the Laplacian, if and only if $v=u\circ \phi$ is in the domain of the Laplacian on  $D$. Moreover, in this case
\begin{equation} \label{eq: laplacian}
\mLaplace _D(v) =\mLaplace _{\Omega}(u) \, ,
\end{equation}
where we identify Radon measures on $D$ and $\Omega$ via  $\phi$.

\subsection{The proof of Proposition \ref{prop: main}}
Due to   \eqref{eq: laplacian}, for any harmonic function $v:\R^2\to \R$, the composition $u=v\circ \phi ^{-1}$ satisfies $\mLaplace _{\Omega}(u) =0$, thus
$u$ is a \emph{harmonic function on $\Omega$}.

Due to  the regularity of harmonic functions on RCD spaces, the function $u:\Omega\to \R$ is locally Lipschitz
(\cite[Theorem 1.1]{Ji14}, which is  a combination
of \cite[Theorem 6.2]{AGS}, \cite[Corollary 2.3]{AGS2} and \cite[Theorem 1.1, Proposition 5.1]{KosShan}). Applying this observation to the coordinate functions $v_{1,2}$ we deduce that $\phi^{-1} :\Omega \to D$ is locally Lipschitz.  Since
$\frac 1 f  \circ \phi^{-1}$ is a weak minimal upper gradient of $\phi ^{-1} :\Omega \to D$,
we deduce that $f^{-1}$ is locally bounded on $D$.

Let now $v:D\to \R$ be a harmonic function. Consider the composition   $u=v\circ \phi ^{-1}  \in N^{1,2} (\Omega)$. By \eqref{eq: laplacian}, $\mLaplace _{\Omega}(u) =0$.
Due to \eqref{eq: gradients} and \eqref{eq: equalitym} the right hand side of \eqref{eq: proposition} is given by $2\kappa \cdot \rho _u ^2 \cdot \mathcal H^2_{\Omega}$,
where $\rho _u$ is the weak minimal upper gradient of $u$.  Using  \eqref{eq: gradients}  again, it remains to show the following claim for
 any open subset $O\subset \bar O \subset \Omega$  .
A representative of $\rho _u ^2$ is contained in  the domain
of the Laplacian in
$N^{1,2}  (O)$ and
  we have the comparison of measures
	\begin{equation} \label{eq: O}
\mLaplace _{O} (\rho _u ^2) \geq 2 \kappa \cdot \rho _u ^2 \cdot \mathcal H^2 _{O} \,.
\end{equation}
The proof of \eqref{eq: O} follows from the Bochner inequality \cite[Proposition 4.36]{Gigli-memoirs}, \cite[Remark 6.3]{AGS}, \cite{AGMR} by localization, as follows.

 If the function $u$ is a restriction to $O$ of a \emph{test function}  $\hat u \in N^{1,2} (X)$  in the sense of
\cite[Equation (7.2)]{Ambrosio-review} then \eqref{eq: O} is precisely  \cite[Proposition 4.36]{Gigli-memoirs}, since $\mLaplace _O(u)=0$.  In general, we multiply $u$ by  a   test function which is constant $1$ on $O$ and has support in $\Omega$  (the existence of such cut-off functions has been verified in  \cite[Lemma 6.7]{local}).  This provides a function $\hat u\in N^{1,2} (X)$ which
restricts to  $u$ on $O$  and is a test function on $X$,  \cite[Proposition 4.17, Theorem 4.29]{Gigli-memoirs}.

This finishes the proof of Proposition \ref{prop: main}.

\subsection{An analytic conclusion}
By a combination of a smoothing  argument and a pointwise computation we are going to conclude:
% derive from Proposition \ref{prop: main}  the following conclusion:

\begin{cor} \label{cor: conclusion}
The  function $\log (f^2)$ is contained in  $L^1 _{loc} (D)$.  The distributional Laplacian 
$\Delta (\log (f^2))$ is a Radon measure on $D$ and satisfies
\begin{equation} \label{eq: maineq}
\Delta (\log (f^{2}))  \leq  - {2\kappa} \cdot {f^{2}}\cdot \mathcal H^2 _{D} \,.
\end{equation}
 For any domain $O$, compactly contained in $D$ there exist a sequence
of smooth functions $f_n:O\to (0,\infty)$ such that $f_n$ satisfy \eqref{eq: maineq} on $O$,
such that $\log(f_n)$ converges to $\log(f)$ in $L^1 (O)$
and such that $\Delta (\log (f_n ^2))$ weakly  converge to $\Delta (\log (f^2))$ as measures on $O$.
\end{cor}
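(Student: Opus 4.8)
The plan is to pass to the reciprocal conformal factor $w:=f^{-2}$, for which Proposition \ref{prop: main} reads $\Delta_D\big(w\,|\nabla v|^2\big)\geq 2\kappa\,|\nabla v|^2\,\mathcal H^2_D$ for every harmonic $v:D\to\R$, and to prove the equivalent reformulation of \eqref{eq: maineq}, namely $\Delta(\log w)\geq 2\kappa\,w^{-1}\,\mathcal H^2_D$, since $\log(f^2)=-\log w$ and $f^2=w^{-1}$. I would first isolate a \emph{pointwise lemma} in the smooth case: if $w>0$ is smooth on a domain and satisfies the above family of inequalities, then $\Delta w-|\nabla w|^2/w\geq 2\kappa$, equivalently $\Delta\log w\geq 2\kappa/w$. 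This is the ``pointwise computation''. Writing a harmonic $v=\operatorname{Re}F$ with $F$ holomorphic, so that $|\nabla v|^2=|F'|^2$, the Leibniz rule together with $\Delta|F'|^2=4|F''|^2$ gives
$$\Delta\big(w\,|F'|^2\big)=(\Delta w)\,|F'|^2+8\operatorname{Re}\big[(\partial w)\,F'\,\overline{F''}\big]+4\,w\,|F''|^2 .$$
At a fixed point one may prescribe $F'$ and $F''$ independently (take $F$ a quadratic polynomial), so the inequality $\Delta(w|F'|^2)\geq 2\kappa|F'|^2$ becomes nonnegativity of a Hermitian form in $(F',F'')$ whose matrix is
$$\begin{pmatrix} \Delta w-2\kappa & 4\,\bar\partial w \\ 4\,\partial w & 4w \end{pmatrix}\succeq 0,$$
and the Schur complement against the corner $4w>0$ (using $|\nabla w|^2=4|\partial w|^2$) is exactly $\Delta w-|\nabla w|^2/w\geq 2\kappa$.

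Second, the ``smoothing argument''. I would set $w_\epsilon:=w*\eta_\epsilon$ for a standard nonnegative mollifier, defined on a given $O\Subset D$ once $\epsilon<\operatorname{dist}(O,\partial D)$; these are smooth and positive. The crucial point is that the whole family of inequalities in Proposition \ref{prop: main} survives mollification, because it is \emph{affine} in $w$ and harmonicity is \emph{translation invariant}. Concretely, for fixed harmonic $v$ and nonnegative $\psi\in C_c^\infty(O)$, setting $v_y(\cdot)=v(\cdot+y)$ (again harmonic) and $\psi_y(\cdot)=\psi(\cdot+y)$ and changing variables, one rewrites $\int w_\epsilon\,|\nabla v|^2\,\Delta\psi$ as the $\eta_\epsilon$-average of the quantities $\langle\Delta(w\,|\nabla v_y|^2),\psi_y\rangle\geq 2\kappa\int|\nabla v_y|^2\,\psi_y$, which yields $\Delta\big(w_\epsilon\,|\nabla v|^2\big)\geq 2\kappa\,|\nabla v|^2$ on $O$. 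Applying the smooth pointwise lemma to $w_\epsilon$ then gives $\Delta\log w_\epsilon\geq 2\kappa/w_\epsilon$, so that $f_\epsilon:=w_\epsilon^{-1/2}$ is smooth, positive, and satisfies \eqref{eq: maineq} on $O$.

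Finally I would let $\epsilon\to 0$. Jensen's inequality applied to the convex function $t\mapsto 1/t$ gives the pointwise bound $w_\epsilon^{-1}\leq (w^{-1})*\eta_\epsilon=f^2*\eta_\epsilon$, so $f_\epsilon^2=w_\epsilon^{-1}$ is bounded in $L^1(O)$ because $f\in L^2$; combined with $w_\epsilon\to w$ a.e. and a Vitali/Pratt argument this upgrades to $f_\epsilon^2\to f^2$ in $L^1(O)$, and likewise $\log(f_\epsilon^2)=-\log w_\epsilon\to\log(f^2)$ in $L^1(O)$ (bounded below by a constant since $w=f^{-2}\in L^\infty_{loc}$, and above by $f_\epsilon^2$). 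The $L^1$-convergence of $\log w_\epsilon$ forces $\Delta\log w_\epsilon\to\Delta\log w$ in the distributional sense. Writing $\Delta\log w_\epsilon=\mu_\epsilon+2\kappa f_\epsilon^2\,\mathcal H^2_D$ with $\mu_\epsilon\geq 0$, testing against a cutoff and using the $L^1$-bounds on $\log w_\epsilon$ and $f_\epsilon^2$ shows that $\mu_\epsilon$ has uniformly bounded mass on $O$; passing to a subsequence $\mu_\epsilon\rightharpoonup\mu\geq 0$ while $2\kappa f_\epsilon^2\,\mathcal H^2_D\rightharpoonup 2\kappa f^2\,\mathcal H^2_D$. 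Hence $\Delta\log w$ is the Radon measure $\mu+2\kappa f^2\,\mathcal H^2_D\geq 2\kappa f^2\,\mathcal H^2_D$, which is precisely \eqref{eq: maineq}, and the $f_\epsilon$ are the asserted approximants. The integrability claim $\log(f^2)\in L^1_{loc}(D)$ is immediate: $f^{-2}\in L^\infty_{loc}$ bounds $\log(f^2)$ below, while $\log(f^2)\leq f^2\in L^1_{loc}$ bounds it above.

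The hard part will be the limit $\epsilon\to 0$, that is, transferring \eqref{eq: maineq} with the correct sign and measure structure. Two points require care: the uniform $L^1$-control of $f_\epsilon^2=w_\epsilon^{-1}$, where Jensen's inequality is indispensable (mollification can only decrease $w^{-1}$ on average, whereas no such monotonicity is available directly for $f_\epsilon^2$), and the lower-semicontinuity step isolating the nonnegative part $\mu_\epsilon$, which is what carries the bound $\geq 2\kappa f^2$ into the distributional limit. By contrast, the preservation of the entire family of inequalities under mollification—conceptually just affinity in $w$ together with translation invariance of harmonic functions—is the structural observation that makes the smooth pointwise lemma applicable in the first place.
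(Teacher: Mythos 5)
Your proposal is correct and follows essentially the same route as the paper: the same reduction to $h=f^{-2}$, the same mollification step justified by linearity in $h$ and translation invariance of harmonicity, the same Jensen-inequality control of $h_\epsilon^{-1}$ and $\log h_\epsilon$, and the same pointwise computation in the smooth case. Your Hermitian-form/Schur-complement packaging of that pointwise step is just a cleaner rewriting of the paper's optimization over the traceless symmetric matrix $A$ and the parameter $\lambda$, and your final weak-convergence argument spells out the ``continuity argument'' the paper leaves implicit.
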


\begin{proof}
Consider the function $h= f^{-2}$.  Due to Proposition \ref{prop: main}, the function $h$ is locally bounded on $D$. Moreover, for any harmonic function $v$ on $\R^2$, the function $h$  satisfies in the  sense of measures on $D$
\begin{equation} \label{eq: first} 
	\Delta (h \cdot |\nabla v|^2) \geq 2  \kappa \cdot |\nabla v|^2\cdot \mathcal H^2 _{D} \, .
	\end{equation}
%We remark, that  the integrability assumption on $h^{-1}$ in the lemma is superfluous and that the converse of the lemma holds true, but these additional statements will not be needed in the sequel.
% follows directly
%Thus our function $f^{-2}$ fulfills the assumptions and therefore the conclusions of the following purely analytic statement about functions on the unit disc:
%		\begin{equation} \label{eq: first}
%\Delta (h \cdot |\nabla v|^2) \geq 2  \kappa \cdot |\nabla v|^2 \, .
%	\end{equation}
 %Then, $h$ has an upper semi-continuous representative, $\log (h) \in L^1 _{loc}$
	% has a unique lower semi-continuous, locally bounded representative.
	%On any component $O$ of $U$, either $h$ is constantly $0$, or positive almost everywhere
%	 and  the distributional Laplacian  $\Delta (\log (h) )$ is a Radon measure  satisfying
%	\begin{equation}  \label{eq: second}
%	 \Delta _U (\log (h)) \geq \frac{2\kappa}{h} \, .
%\end{equation}
%\end{lem}
%We claim that for any smooth positive function $\hat h :O\to \R$ which satisfies \eqref{eq: first}, for any harmonic function $v$, we 

%For any positive number  $q>0$ the function $h+s$ satisfies 
%\eqref{eq: first}, due to Bochner's inequality.
We fix a ball $O=B_{1-\delta} (0) \subset D$ for the 
rest of the proof.
For all $\delta >2\cdot \epsilon >0$ consider the  mollifications $h_{\epsilon} :O\to \R$ obtained by a convolution of $h$ with the usual smooth  mollifiers
$\rho _{\epsilon} :\R^2 \to [0,\infty)$ whose  support is  in $B_{\epsilon} (0)\subset \R^2$.
Since the function $h$ is positive almost everywhere, the smooth functions $h_{\epsilon}$ are positive on $O$.
By a direct computation (or using the observation that
\eqref{eq: first} is a system of linear inequalities on the function $h$, which is moreover, equivariant with respect to translations), we see
that $h_{\epsilon}$ satisfies \eqref{eq: first}  on $O$ for all  harmonic functions $v$ on $\R^2$.

The function $h$ is bounded from above on the $\epsilon$-neighborhood of $O$, hence so is $\log (h)$. On the  other hand,  $-\log (h) =\log (\frac 1 h) \leq \frac 1 h$.
Thus, the integrability of $f^2=\frac 1 h$ shows  $\log (h) \in L^1 (O)$.

The convexity of the function $t\to \frac 1 t$ and Jensen's inequality  imply 
$$\frac 1 {h_{\epsilon}} \leq \left (\frac  1 h \right ) _{\epsilon} \,,$$
where on the right side we have the mollifications of the function $\frac 1 h$.
Since $(\frac 1 h) _{\epsilon}$ converge in $L^1 (O)$ to $\frac 1 h$ as $\epsilon $ goes to $0$,
we deduce that $\frac 1 {h_{\epsilon} }$ converges to $\frac 1 h$ is $L^1 (O)$ as well.

Similarly, $t\to -\log (t)=\log (\frac 1 t )$ is convex and arguing as above with Jensen's inequality
we see that $\log (h_{\epsilon} )$ converges in $L^1(O)$ to $\log (h)$ as $\epsilon$ goes to $0$.

For all $n > \frac 2 {\delta}$ consider  the smooth function $f_n$ on $O$ such that $f_n^{-2} =h_{\frac 1 n}$.
Like above, $\log f_n =-2\log h_{\frac 1 n}$ converge to  $\log f$ in $L^1 (O)$.  The remaining statements follow by a continuity argument,
once we have verified  $\Delta (\log (f_n^2)) \leq -2 \kappa \cdot f_n^2$ on $O$. 

As we have seen above, the smooth positive functions $h_{\frac  1 n}$ satisfy \eqref{eq: first}  on $O$, for all harmonic functions $v :\R^2\to \R$.
Therefore, we only need to verify that for a \emph{smooth positive} function $h$ on a domain $U$ in $\R^2$ the validity of  \eqref{eq: first}  for all harmonic functions $v :\R^2\to \R$
implies 	
\begin{equation} \label{eq: smoothcase}
	\Delta (\log (h)) =\frac {\Delta (h)} {h} - \frac {|\nabla h |^2} {h^2} \geq \frac {2\kappa} {h} \;.
	\end{equation}
It is sufficient to verify this  pointwise statement at a single point  $z\in U$ which we may assume to be  $z=0$.
%	We need to prove the following pointwise inequality
%	\begin{equation} \label{eq: third}
%h^2\cdot \Delta (h)=	h\cdot \Delta (h) - |\nabla h|^2 \geq 2\kappa \cdot h \,.
%	\end{equation}
	
	Set $e:= \nabla h (0)$.  If $e=0$ we choose $v$ to be a linear non-zero function on $\R^2$.  Then \eqref{eq: first}  implies $\Delta (h)(0)  \geq 2\cdot \kappa$, hence,
	\eqref{eq: smoothcase}.

	If $e\neq 0$, 	fix $\lambda \in \R$  and consider the uniquely determined
	symmetric traceless matrix $A:\R^2\to \R^2$ with
	%hich satisfies
	$A(e)=\lambda \cdot e$. The function
	% Here, we choose  $\lambda = -\frac { |e|^2 } {2\cdot h(0)} $. The function
	$$v(z):= \left\langle z,A(z) +e\right\rangle$$
	is harmonic on $\R^2$ and satisfies $\nabla v(z)= e+ 2\cdot A(z)$.
	Therefore,
	$$\nabla ( |\nabla v|^2 ) (z)=  4\lambda \cdot e + 8 \cdot  A^2(z)\; \;  \text {and} \; \; \Delta (|\nabla v|^2) (z)= 16 \lambda ^2 \,.$$
	Thus, the right hand side of \eqref{eq: first}  at $z=0$ is just $2\kappa \cdot |e| ^2$.
	
	%The Hessian $Hess (v)$ is the constant bilinear form $Hess (v) (a,b)= 2 A (a,b)$
	%of norm $2\lambda$.
	%By the Bochner formula in $\R^2$ and the harmonicity of $v$ we have  on $\R^2$
	%$$\Delta (|\nabla v|^2)  =|Hess (v)| ^2 =4\cdot \lambda ^2 \,.$$
	%At the origin $0$  we have
	%$$ \nabla _e |\nabla v|^2 =2 A (e,\nabla v > =2 A (e, e) =2\lambda  \cdot | e|^2 = - 2\cdot \frac {|e| ^4 } {h(0)} \, .$$
	For the left hand side of \eqref{eq: first} at the point $z=0$ we compute
	$$\Delta (h) \cdot |\nabla v|^2 + 2 \left\langle\nabla h, \nabla
	(|\nabla v| ^2)\right\rangle + h  \cdot \Delta (|\nabla v|^2) = $$
	$$= \Delta (h) (0) \cdot |e|^2 + 8\lambda \cdot |e^2| + 16\cdot \lambda ^2 \cdot h(0) \,.$$
	%\cdot  \frac {|e|^4} {h(0)} + \frac {|e|^4}  {h(0)}  =|e|^2 \cdot (\Delta (h) (0) - \frac {|e|^2}  {h(0)})\, .$$
	For  $\lambda = -\frac { |e|^2 } {4\cdot h(0)} $ the inequality \eqref{eq: first} now reads
	$$|e|^2 \cdot \left(\Delta (h) (0) - \frac {|e|^2}  {h(0)}\right) \geq 2\kappa \cdot |e|^2\, .$$
	Dividing by $h(0) \cdot  |e|^2$ we deduce \eqref{eq: smoothcase}.
	
	This finishes the proof of Corollary \ref{cor: conclusion}.
\end{proof}

 %  \begin{proof}
 %  	Due to Lemma \ref{lem: loclip} and Corollary \ref{cor: f2} it remains to prove that for any $u\in \mathcal C _c ^{\infty} (D)$ we have $\Delta _X (u) \in W^{1,2} (X)$.
  % 	However,
   %	$$\Delta _X (u) = \tilde \Delta u =\frac {\Delta u} {f^2} \,.$$
   	%Since $\Delta u$ is smooth with compact support in $D$ and $f^{-2} \in N^{1,2} _{loc} (D)$ we see that $f^{-2} \cdot \Delta u \in  \mathcal N \subset N^{1,2} (X)$.
  % \end{proof}

\section{Curvature bound in the regular part} \label{sec: 6}
\subsection{Preliminaries from Alexandrov geometry} We refer to   \cite{BBI01} for basics about Alexandrov geometry and just agree on  notation here, following \cite{Petrunincomplete}.
 Let $\kappa$ be a fixed real number as before.  For points $p,x_1,x_2$ in a metric space $Y$, we denote
by $\tilde {\angle }^{\kappa } (p ^{x_1} _{x_2})$  the \emph{$\kappa$-comparison angle}, whenever it exists. Thus,  $\tilde {\angle }^{\kappa } (p ^{x_1} _{x_2})$ is the angle in the constant curvature surface  $M^2_{\kappa}$ at the vertex
$\tilde p$    of a triangle $\tilde p\tilde x_1\tilde x_2$ with the same side-lengths  as $px_1x_2$.

  A subset $O$ of a metric space $Y$ satisfies the \emph{$(1+3)$-points comparison} if for any quadruple of points $p;x_1,x_2,x_3 \in O$
	 the inequality
	$$\tilde {\angle }^{\kappa } (p ^{x_1} _{x_2}) + \tilde {\angle }^{\kappa } (p ^{x_2} _{x_3}) + \tilde {\angle }^{\kappa } (p ^{x_3} _{x_1}) \leq 2\pi \,$$
	holds true or one of the $\kappa$-comparison angles is not defined.
	
A   metric space $Y$ has curvature $\geq \kappa$   if
every point $y\in Y$ has a neighborhood $O$ which satisfies the  \emph{$(1+3)$-points comparison} and such that every pair of points in $O$ is connected in $Y$ by a geodesic.

A  complete geodesic metric space of curvature $\geq \kappa$ is called an \emph{Alexandrov space} of curvature $\geq \kappa$.

 If $Y$ has curvature $\geq \kappa$, the ball $\bar B_{2R}(y) \subset Y$ is compact and any pair of points in $B_R(y)$  is connected in $Y$ by a geodesic, then  $B_R(y)$ satisfies the $(1+3)$-point comparison, \cite[p. 3]{Petrunincomplete}.

Note finally that the $(1+3)$-point comparison property  (of subsets) is stable under Gromov--Hausdorff convergence.

The aim of this section is the  following
%proof of the following step on the way to our main theorem. We continue to use previous notation.
 \begin{prop} \label{prop: regprop}
 	The  subspace $X^{\delta}$ of $\delta$-regular points in $X$
	%equipped with its intrinsic metric $d_{X^{\delta}}$
		has curvature bounded from below by $\kappa$.
\end{prop}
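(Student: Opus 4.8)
The plan is to read the differential inequality of Corollary \ref{cor: conclusion} as a Reshetnyak-type lower curvature bound for the conformal weight $f$, to convert it into an Alexandrov bound through Reshetnyak's theory \cite{Reshetnyak-GeomIV}, and finally to identify the resulting metric with $d_\Omega$. Fix $x_0\in X^\delta$ and, shrinking $U_{x_0}$ if necessary, arrange that the Jordan domain $\Omega$ of Section \ref{sec: 4} is contained in $X^\delta$ and contains $x_0$; let $\phi:\bar D\to\bar\Omega$ and $f=\rho_\phi$ be as there. By \eqref{eq:length} the $d_\Omega$-length of $\phi\circ\gamma$ equals $\int_\gamma f$ for almost every curve $\gamma$, so that $(\Omega,d_\Omega)$ is, morally, the flat disc reweighted by the conformal length element $f$. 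Put $\lambda:=\log f$. Corollary \ref{cor: conclusion} supplies exactly the data of Reshetnyak's theory: $\lambda\in L^1_{loc}(D)$ and $\Delta\lambda$ is a signed Radon measure, hence $\lambda$ is a difference of two subharmonic functions, and \eqref{eq: maineq} rewrites as
\[
-\Delta\lambda\;\ge\;\kappa\cdot f^2\cdot\mathcal H^2_D .
\]
Recalling from \eqref{eq: areachange2} that $f^2\cdot\mathcal H^2_D$ is the area measure, this says precisely that the curvature measure $\omega:=-\Delta\lambda$ of the conformal surface dominates $\kappa$ times its area, i.e.\ a lower curvature bound $\kappa$ in Reshetnyak's sense.

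To turn this into the Alexandrov condition I would use the smooth approximation of Corollary \ref{cor: conclusion}. On a disc $O$ compactly contained in $D$ the smooth positive weights $f_n$ satisfy the same inequality, so the smooth Riemannian metrics $f_n^2\,(dx^2+dy^2)$ have Gaussian curvature $-f_n^{-2}\Delta(\log f_n)\ge\kappa$ and are therefore Alexandrov spaces of curvature $\ge\kappa$; in particular they obey the $(1+3)$-point comparison. Since $\log f_n\to\log f$ in $L^1(O)$ while the curvature measures $\Delta(\log f_n^2)$ converge weakly, Reshetnyak's convergence theorem \cite{Reshetnyak-GeomIV} gives that the associated length metrics $d_{f_n}$ converge uniformly to the length metric $d_f$ determined by $f$. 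As the $(1+3)$-point comparison is stable under Gromov--Hausdorff convergence, as noted above, $(O,d_f)$ has curvature $\ge\kappa$.

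The main obstacle is the identification $d_\Omega=d_f$, that is, the assertion that the metric on $\Omega$ is controlled by $f$ \emph{everywhere} and not only along almost every curve. Here I would exploit that $f$ is well behaved: $f^{-2}\in L^\infty_{loc}(D)$ by Proposition \ref{prop: main} and $f\in L^2$, so that $d_f$ is a genuine length metric comparable from below to the Euclidean one; moreover $\phi$ and $\phi^{-1}$ carry null families of curves to null families, so the almost-every-curve identity \eqref{eq:length} can be upgraded to an equality of length structures. Concretely, for $d_\Omega(\phi(p),\phi(q))\le d_f(p,q)$ I would approximate a near-optimal $d_f$-curve by curves on which \eqref{eq:length} holds, using a modulus estimate; for the reverse inequality I would pull back an arbitrary $d_\Omega$-curve by $\phi^{-1}$ and apply \eqref{eq:length} to a good representative. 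Granting $d_\Omega=d_f$, a small $d_\Omega$-ball around $x_0$ is a neighborhood of $x_0$ in $X^\delta$ on which $d_X$ and $d_\Omega$ agree, it satisfies the $(1+3)$-point comparison by the previous paragraph, and, since $(\bar\Omega,d_\Omega)$ is a compact geodesic space, any two of its points sufficiently close to $x_0$ are joined by a short geodesic lying in $\Omega\subset X^\delta$. As $x_0\in X^\delta$ was arbitrary, this is exactly the statement that $X^\delta$ has curvature $\ge\kappa$.
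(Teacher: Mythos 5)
Your overall strategy coincides with the paper's: form the metric $d_f$ via Reshetnyak's theory, use the smooth approximations $f_n$ from Corollary \ref{cor: conclusion} together with the convergence $d_{f_n}\to d_f$ and the stability of the $(1+3)$-point comparison to show that $(O,d_f)$ has curvature $\ge\kappa$ (this is exactly Lemma \ref{lem: equalm}), and then identify $d_f$ with the metric induced from $X$. You also correctly isolate the one genuinely hard step, namely upgrading \eqref{eq:length} from \emph{almost every} curve to \emph{every} curve, i.e.\ proving $d_\Omega=d_f$. But your proposed resolution of that step is a gap. The difficulty is that a $2$-modulus-zero family of curves can perfectly well contain every curve you actually need: a single rectifiable curve, and even the whole family of geodesics between two fixed points, typically has vanishing $2$-modulus, so the ``almost every curve'' statement gives no information about any particular near-optimal curve. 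Your plan to ``approximate a near-optimal $d_f$-curve by curves on which \eqref{eq:length} holds'' then runs into a second problem: even if nearby good curves exist in abundance, the length functionals $\ell_f$ and $\ell_X$ are only lower semicontinuous under uniform convergence of curves, so you cannot conclude that the approximating good curves have $f$-length (or $X$-length) close to that of the original curve from above, which is what both inequalities require. Similarly, ``applying \eqref{eq:length} to a good representative'' of a pulled-back $d_\Omega$-geodesic is not meaningful: the curve is fixed, and nothing prevents it from lying in the exceptional family. Making a Fubini-type averaging argument work here would require additional structure (Lebesgue-point arguments along families of parallel segments, reduction to polygonal curves, etc.) that you have not supplied.

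The paper closes this gap by a different mechanism (Lemma \ref{lem: 1lip}): the almost-every-curve identity is first converted into the statement that $\phi^{-1}\in N^{1,2}(W,O_f)$ admits the constant function $1$ as a weak upper gradient, and then the \emph{Sobolev-to-Lipschitz property} of $RCD$ spaces is invoked to conclude that the distance functions $\psi_y=d_{O_f}(y,\cdot)\circ\phi^{-1}$ have locally $1$-Lipschitz representatives, whence $\phi^{-1}$ is locally $1$-Lipschitz everywhere; the reverse direction uses the same argument with the roles exchanged, where the Sobolev-to-Lipschitz property of $O_f$ is available because Reshetnyak's theory shows $O_f$ is locally an Alexandrov space. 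This is not a cosmetic difference: the Sobolev-to-Lipschitz property is a nontrivial regularity theorem specific to the spaces at hand, and it is precisely the ingredient that replaces the curve-approximation you are gesturing at. To repair your proof, either import this lemma or supply a genuine curve-selection argument with quantitative control of $\ell_f$ from above.
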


 \subsection{Reshetnyak's theory} We continue to use the notation from  Section \ref{sec: 4} and Section \ref{sec: 5}. For  a  homeomorphism 
$\phi:D\to \Omega \subset X $ the length  $\ell _X (\phi \circ \gamma )$ is given by \eqref{eq:length}, 
for almost all curves $\gamma $  in $D$. The conformal factor  $f=\rho _{\phi}$ satisfies  the conclusion of Corollary \ref{cor: conclusion}.

By Corollary \ref{cor: conclusion}, the function $\Delta (\log f)$  is a Radon measure. Thus, on any compactly contained domain $O\subset D$ we can 
canonically represent $\log (f)$ as a sum of a harmonic function and a \emph{Riesz potential},
  \cite[p. 99]{Reshetnyak-GeomIV}. Using this representative of $f$,  the $f$-length $\ell _f(\gamma)$ of \emph{any} rectifiable curve
 $\gamma$ on $O$   is defined  in \cite[p. 100]{Reshetnyak-GeomIV}  by the formula  \eqref{eq:length}.

This induces a new metric $d_f$ on $O$ by letting $d_f (z_1,z_2)$ be the
infimum of all $f$-lengths of rectifiable curves connecting $z_1$ and $z_2$.  The metric $d_f$ induces  the original Euclidean topology on $O$, \cite[Theorem 7.1.1]{Reshetnyak-GeomIV}.
We define the metric space  $O_f =(O,d_f)$  to be the disc $O$ equipped with the metric $d_f$.

The following statement is implicitly contained  in \cite[p. 140]{Reshetnyak-GeomIV}. For convenience of the reader, we reduce the result to other more explicit   statements  in  \cite{Reshetnyak-GeomIV}.
%in  we explain the proof for  convenience.
\begin{lem} \label{lem: equalm}
The space $O_f$ has curvature bounded from below by $\kappa$.
\end{lem}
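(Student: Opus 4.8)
<br>

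The goal is to show that the conformally-changed disc $O_f = (O, d_f)$, where the conformal factor $f$ satisfies $\Delta(\log f^2) \leq -2\kappa f^2 \cdot \mathcal H^2_D$ in the sense of measures, is an Alexandrov space of curvature $\geq \kappa$. The plan is to invoke Reshetnyak's analytic theory of two-dimensional surfaces with bounded integral curvature directly, translating our differential inequality into the language of his \emph{curvature measure}.

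Let me sketch the approach.

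\textbf{Reduction to Reshetnyak's curvature measure.} The central object in \cite{Reshetnyak-GeomIV} is the \emph{integral curvature} $\omega$ of the surface $O_f$, which for a metric of the form $d_f$ with $\log f$ equal to a harmonic function plus a Riesz potential is given as a signed Radon measure. The key identity, found in \cite{Reshetnyak-GeomIV}, is that the curvature measure of $O_f$ equals $-\Delta(\log f)$ (up to the standard normalization, i.e.\ $\omega = -\Delta(\log f)$ as measures, where $\Delta$ is the distributional Laplacian on $D$). First I would recall precisely this formula, citing the relevant page of \cite{Reshetnyak-GeomIV}, so that our analytic inequality from Corollary \ref{cor: conclusion} becomes a statement about $\omega$. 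Concretely, since $\Delta(\log f^2) = 2\Delta(\log f) \leq -2\kappa f^2 \cdot \mathcal H^2_D$, we obtain
$$
\omega = -\Delta(\log f) \geq \kappa \cdot f^2 \cdot \mathcal H^2_D = \kappa \cdot \mathcal H^2_{O_f},
$$
where the last equality uses \eqref{eq: areachange2}, namely that the area element of $O_f$ is exactly $f^2 \cdot \mathcal H^2_D$. Thus the curvature measure of $O_f$ dominates $\kappa$ times its own area measure.

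\textbf{From the curvature-measure inequality to the Alexandrov bound.} The second and main step is to deduce a lower Alexandrov curvature bound from this measure inequality. Reshetnyak's theory provides exactly the bridge: a surface of bounded integral curvature whose \emph{negative part} $\omega^-$ of the curvature measure is suitably controlled carries a comparison geometry governed by $\omega$. The relevant criterion, stated in \cite[p. 140]{Reshetnyak-GeomIV}, characterizes lower curvature bounds via an \emph{excess inequality for triangles}: for a geodesic triangle $T$ in $O_f$ with angles $\alpha_i$ and the curvature $\omega(T)$ of its interior, one has the Gauss--Bonnet-type relation $\sum \alpha_i - \pi = \omega(T)$, and the condition $\omega \geq \kappa \cdot \mathcal H^2_{O_f}$ forces the angle sum to exceed that of the $\kappa$-comparison triangle of the same perimeter. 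I would therefore invoke Reshetnyak's angle-comparison / Gauss--Bonnet formula to convert $\omega \geq \kappa \cdot \mathcal H^2_{O_f}$ into the statement that every geodesic triangle is ``fatter'' than its comparison triangle in $M^2_\kappa$, which is precisely the $(1+3)$-point (equivalently, angle) comparison defining curvature $\geq \kappa$ locally.

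\textbf{Main obstacle.} The hard part is \emph{not} the curvature computation but the careful matching of conventions and the verification that the hypotheses of Reshetnyak's comparison theorems are met. Specifically, I must confirm that $O_f$ is a surface of bounded integral curvature in Reshetnyak's sense (i.e.\ that $|\omega| = |\Delta \log f|$ is a finite Radon measure locally, which Corollary \ref{cor: conclusion} guarantees since $\Delta(\log f^2)$ is a Radon measure and the upper bound controls its positive part while the $L^1_{loc}$ bound on $\log f^2$ controls the measure globally), and that geodesics and angles in $d_f$ behave as Reshetnyak requires, so that his Gauss--Bonnet formula applies to triangles. The normalization and sign conventions between the distributional Laplacian $\Delta$ used here and Reshetnyak's curvature $\omega$ must be pinned down exactly, since an erroneous factor or sign would invert the curvature bound. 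Once these bookkeeping issues are settled and the cited results of \cite{Reshetnyak-GeomIV} are correctly assembled, the conclusion that $O_f$ has curvature $\geq \kappa$ follows directly; I expect this to be essentially a matter of citing the right statements rather than proving new estimates.
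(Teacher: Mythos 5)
Your reduction of the hypothesis to Reshetnyak's language is correct and matches the paper's setup: $\omega=-\Delta(\log f)\geq \kappa f^2\cdot\mathcal H^2_D=\kappa\cdot\mathcal H^2_{O_f}$, and Corollary \ref{cor: conclusion} does guarantee that $\omega$ is a (signed) Radon measure, so $O_f$ is a surface of bounded integral curvature. The gap is in your second step. The implication ``$\omega\geq\kappa\cdot\sigma$ on all Borel sets $\Rightarrow$ curvature $\geq\kappa$ in the sense of Alexandrov'' is not a corollary of the Gauss--Bonnet formula; it is itself the deep comparison theorem of the Alexandrov--Zalgaller--Reshetnyak theory, and the paper explicitly notes that it is only \emph{implicitly} contained in \cite[p.~140]{Reshetnyak-GeomIV}, i.e.\ not available as a citable black box. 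Your sketch of why it should hold does not close this: Gauss--Bonnet for a geodesic triangle $T$ in a surface of bounded curvature reads $\sum\alpha_i-\pi=\omega(\operatorname{int}T)+\sum\tau_i$, where the turns $\tau_i$ of the sides from inside $T$ need not vanish for shortest arcs in the singular setting; even granting the excess inequality $\sum\alpha_i-\pi\geq\kappa\cdot\operatorname{Area}(T)$, passing from it to the comparison with the model triangle in $M^2_\kappa$ (which has the same \emph{side lengths}, not the same perimeter) requires an accompanying area comparison and a genuine bootstrapping argument, not bookkeeping of conventions. So the ``main obstacle'' is mislocated: it is the comparison theorem itself, not normalization.

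The paper avoids all of this by a smoothing argument, which is the route you should take. Mollify: Corollary \ref{cor: conclusion} produces smooth positive $f_n$ on $O$ satisfying \eqref{eq: maineq}, with $\Delta(\log f_n^2)$ converging weakly to $\Delta(\log f^2)$. For the smooth Riemannian metrics $f_n^2(dx^2+dy^2)$ the Gaussian curvature is computed pointwise by $-\Delta(\log f_n)/f_n^2\geq\kappa$, so the classical theorem \cite[Theorem 6.5.6]{BBI01} gives that each $O_{f_n}$ has Alexandrov curvature $\geq\kappa$. By \cite[Theorem 7.3.1]{Reshetnyak-GeomIV} the distances $d_{f_n}$ converge locally uniformly to $d_f$, and the $(1+3)$-point comparison on suitable balls is stable under this convergence, which yields the lemma. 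This is genuinely a different (and shorter) path than the one you propose: it trades the singular comparison theory for the smooth comparison theorem plus a stability statement, both of which are explicitly citable.
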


\begin{proof}
We find 
smooth positive functions $f_n:O\to \R$ approximating $f$ as in 
  Corollary \ref{cor: conclusion}.   By   \cite[Theorem 7.3.1]{Reshetnyak-GeomIV},  the distance functions $d_{f_{n}}$ converge on $O$
  locally uniformly against $d_f$.
  
  On the  other hand, $d_{f_n}$ is a smooth Riemannian metric. Its curvature can be computed pointwise by the classical  formula $-\frac {\Delta (\log (f_n))} {f_n^2}$,
  \cite[p. 40]{Reshetnyak-GeomIV}. Thus,  \eqref{eq: maineq} shows that  the Riemannian manifold $O_{f_{n}}$ has sectional
  curvature bounded from below by $\kappa$.

 Hence, $O_{f_{n}}$    has curvature bounded from below by $\kappa$ in the sense of Alexandrov, \cite[Theorem 6.5.6]{BBI01}.   Therefore, for any compact metric ball $B$ in
$(O,d_f)$ and the concentric ball $B'$ with  one-third the radius of $B$,
the set $B' \subset O_{f_{n}}$ satisfies the $(1+3)$-point comparison,
for all large $n$. By continuity,  $B'$   satisfies the $(1+3)$-point comparison,  when considered as a subset of $O_f$.
This completes the proof.
\end{proof}

\subsection{Sobolev-to-Lipschitz}  By construction, for \emph{almost all} curves $\gamma$ in $O$,
the length of $\phi \circ \gamma $ in $X$ is equal to the $f$-length
of $\gamma$, hence to the length of $\gamma$ in the metric space $O_f$.
The easiest way
% to identify $Y_f$ with $\Omega$ we need
 to upgrade the equality statement from \emph{almost all} curves to \emph{all} curves,   is via an  application of the
\emph{Sobolev-to-Lipschitz property} of RCD-spaces, stated as follows, \cite[p. 48]{Gigli}, \cite{AGS2}:

For any $RCD(\kappa, 2)$ space $X$, any open subset $W$ of $X$ and any  $u\in  N^{1,2} (W)$ for which the constant function $\rho=1$ is a weak upper gradient, the function $u$ has a  locally $1$-Lipschitz representative.

In fact, the Sobolev-to-Lipschitz property is defined and verified in \cite[p. 48]{Gigli} only in the global case $W=X$, but the proof presented there covers the local version formulated above.

\begin{lem} \label{lem: 1lip}
The open embedding	 $\phi :O_f \to  X$
is a local isometry.
\end{lem}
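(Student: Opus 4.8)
The plan is to show that $\phi:O_f\to X$ is a local isometry by proving that it preserves lengths of \emph{all} rectifiable curves, not merely almost all of them, and then invoking the fact that the metrics $d_f$ and $d_X$ are both intrinsic (length) metrics. Since $\phi$ is already a homeomorphism onto its image $\Omega\subset X$, and since $\ell_X(\phi\circ\gamma)=\ell_f(\gamma)$ for almost every curve $\gamma$ in $O$ by construction, the entire content of the lemma is to remove the ``almost every'' qualifier. Once we know that $\ell_X(\phi\circ\gamma)=\ell_f(\gamma)$ holds for \emph{every} rectifiable $\gamma$, the identity $d_X(\phi(z_1),\phi(z_2))=d_f(z_1,z_2)$ follows by taking infima over connecting curves on both sides, and $\phi$ is a local isometry.

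The mechanism for the upgrade is the Sobolev-to-Lipschitz property quoted just before the statement. First I would observe that it suffices to prove $\ell_X(\phi\circ\gamma)\le\ell_f(\gamma)$ for every curve, since the reverse inequality holds trivially for a length metric (one can also realize it by the same argument applied to $\phi^{-1}$ with conformal factor $1/f$). To obtain the inequality for \emph{all} curves, I would fix a point and consider the $d_f$-distance function $u(\cdot)=d_f(z_0,\cdot)$ on a small ball in $O_f$, transported to $\Omega$ via $\phi$. The function $u\circ\phi^{-1}:\Omega\to\R$ is a candidate element of $N^{1,2}(W)$ for a suitable open $W\subset X$, and the key point is that the constant function $\rho=1$ serves as a weak upper gradient of $u\circ\phi^{-1}$ with respect to $d_X$: this is exactly the statement that $\ell_X$ along almost every curve controls the $d_f$-increment of $u$, which is the almost-everywhere length equality already established. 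Applying the Sobolev-to-Lipschitz property then yields that $u\circ\phi^{-1}$ has a locally $1$-Lipschitz representative with respect to $d_X$, meaning $|u(z_1)-u(z_2)|\le d_X(\phi(z_1),\phi(z_2))$ for all $z_1,z_2$. Taking $z_1=z_0$ gives $d_f(z_0,z_2)\le d_X(\phi(z_0),\phi(z_2))$, i.e.\ the $d_f$-metric is dominated by the pulled-back $d_X$-metric everywhere, not just almost everywhere.

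Symmetrically, I would run the same argument on the $X$-side: the $d_X$-distance function pulled back to $O$ is seen to have $f$ as a weak upper gradient with respect to $d_f$ (this is the content of the equality $\ell_f=\ell_X\circ\phi$ on almost every curve, read in the other direction), so the Sobolev-to-Lipschitz property on $O_f$ — which also carries a verified lower Ricci/curvature bound — gives the reverse domination $d_X(\phi(z_0),\phi(z_2))\le d_f(z_0,z_2)$. Combining the two dominations yields $d_X\circ(\phi\times\phi)=d_f$ locally, which is precisely the assertion that $\phi$ is a local isometry.

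The main obstacle I anticipate is the careful bookkeeping in verifying that $\rho=1$ is genuinely a \emph{weak upper gradient} of the transported distance function in the precise $N^{1,2}$ sense required by the Sobolev-to-Lipschitz statement. This requires that the distance function $u$ lie in $N^{1,2}$ (local membership, via the Ahlfors regularity and Poincaré inequality already available for $\bar\Omega$ from Section~\ref{sec: 4}), and that the inequality $\ell(u\circ\gamma)\le\int_\gamma 1$ hold for almost every curve in $X$ — the latter is where the almost-everywhere length identity $\ell_X(\phi\circ\gamma)=\ell_f(\gamma)$ enters, combined with the modulus-preservation property of $\phi$ (so that an exceptional family of curves in $O_f$ maps to an exceptional family in $\Omega$ and vice versa). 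Care is also needed because the Sobolev-to-Lipschitz property as stated applies to $X$, so one must ensure the local version on the open set $W$ suffices, exactly as flagged in the remark preceding the lemma.
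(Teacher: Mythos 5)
Your proposal follows essentially the same route as the paper's proof: both directions are obtained by applying the Sobolev-to-Lipschitz property to transported distance functions, using the almost-everywhere length identity together with modulus preservation to check that the constant $1$ is a weak upper gradient, and invoking the curvature bound on $O_f$ from Lemma \ref{lem: equalm} to justify Sobolev-to-Lipschitz on that side. The only slips are cosmetic — neither of the two inequalities is ``trivial,'' and on $O_f$ the relevant weak upper gradient of the pulled-back distance function is the constant $1$ (it is $f$ only with respect to the Euclidean metric on $O$) — but you repair both by running the symmetric argument, exactly as the paper does.
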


\begin{proof}
Set $W= \phi (O_f)$ and 
 consider the inverse map $\psi =\phi ^{-1} :W \to O_f$.
	By construction,  $\psi$ preserves the length of almost any curve $\gamma$ in $W$.  Therefore, the map
	$\psi$ is contained in the Sobolev space $N^{1,2} (W, O_f)$ and the constant function $1$ is a weak upper gradient of $\psi$.
	
	Therefore, for any point $y\in O_f$, the composition $\psi _y \in N^{1,2} (W)$
	of $\psi$ with the distance function in $O_f$ to the point $y$ has the constant
	function $1$ a weak upper gradient. By the Sobolev-to-Lipschitz property, this implies that $\psi _y$ is locally $1$-Lipschitz.  Since $X$ is a geodesic space,
	and $y$ was arbitrary, this implies that $\psi$ is locally $1$-Lipschitz.
	
	In order to prove that $\phi$ is locally $1$-Lipschitz, we apply the same argument. (Alternatively, 	this can be seen directly, as in \cite[Lemma 9.3]{LW-isoparametric}).
	Firstly, $\mathcal H^2_ {O_f} = f^2\cdot \mathcal H^2_O$ and the same computation as in Section \ref{sec: 4} shows that
	any  family of curves of modulus $0$ in $O$  has modulus $0$ in $O_f$.
	Thus, the map $\phi$ preserves the lengths of almost all curves in $O_f$.  Therefore, $\phi \in N^{1,2} (O_f, X)$ and the constant function $1$ is a  weak upper gradient of $\phi$.
	 Arguing as above we deduce that
	$\phi$ is locally $1$-Lipschitz, once the Sobolev-to-Lipschitz property has been verified locally in $O_f$.

But    any point in $O_f$ has a compact  neighborhood isometric to an Alexandrov space, by  \cite[Theorem 7.1.3]{Petrunin-semi}. This implies the Sobolev-to-Lipschitz property, as a consequence of \cite{Petrunin} and \cite[p. 40]{Gigli}.
	
%	This can be seen directly, as in \cite[Lemma 9.3]{LW-isoparametric}, or {\color{red} by observing that   any point in $Y_f$ has a neighborhood isometric to an Alexandrov space, \cite[Theorem 7.1.3]{Petrunin-semi}}, by invoking \cite{Petrunin} and \cite[p. 40]{Gigli}.

Since $\phi$ and $\phi ^{-1}$ are locally $1$-Lipschitz, $\phi$ is  a local isometry.
\end{proof}

For any $\delta$-regular point $x_0\in  X$, we choose a domain  $\Omega$ containing $x_0$ as in Section \ref{sec: 4}.
Lemma  \ref{lem: 1lip} and Lemma \ref{lem: equalm} imply the existence of a neighborhood of  $x_0$ in $\Omega$ which has curvature bounded below by $\kappa$.
This  finishes  the proof of Proposition \ref{prop: regprop}.

\section{Extension to the singular points}
\subsection{Topological statement} Most of this rather long section is devoted to the proof of the following topological statement.

 \begin{prop} \label{prop: final}
	Any point $z \in \partial X$ with $b(z)=\frac 1 2$ has an open neighborhood $U$ in $X$  homeomorphic to
	the closed  half-plane $H$, such that $z$ lies in  the boundary line $\partial H$.
\end{prop}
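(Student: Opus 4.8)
The plan is to pass to the metric double of $X$ along $\partial X$ in a small neighborhood of $z$, so that $z$ becomes an interior point with Euclidean tangent cone $\R^2$, and then to extract the half-plane structure by applying the Cheeger--Colding--Reifenberg theorem to this double and descending through the doubling involution.

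First I would record the infinitesimal model. Since $z\in\partial X$ has $b(z)=\frac12$, the corollary on tangent cones yields $T_z=CZ$ with $\mathcal H^1(Z)=b(z)\cdot 2\pi=\pi$ and $Z$ an interval, so $T_z$ is isometric to the Euclidean half-plane $H$, with $\partial H$ the image of the endpoints of $Z$. Next I would establish the crucial local control near $z$. Combining the lower semicontinuity of $b$ under blow-up with the fact that every point of $H$ has density $\geq\frac12$, one sees that for every $\eta>0$ there is a neighborhood $V$ of $z$ on which $b\geq\frac12-\eta$: otherwise a sequence $y_i\to z$ with $b(y_i)\le\frac12-\eta$, rescaled by $d(z,y_i)$, would converge to a point $w\in H$ with $b_H(w)\le\frac12-\eta$, which is impossible.

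The heart of the argument, and the step I expect to be the main obstacle, is to upgrade this to genuine uniform regularity of $V\setminus\partial X$. Concretely, I need a neighborhood on which every point off $\partial X$ is $\delta$-regular. The danger are interior cone points (tangent cone $CZ$ with $Z$ a circle of length close to $\pi$) of density bounded away from $1$; by Lemma~\ref{lem: delta} these form a set that is discrete in $X\setminus\partial X$, so the only way they can obstruct us is by accumulating at $z$ itself. I would rule this out by a diagonal blow-up: a sequence $y_i\to z$ of such points maps, after rescaling at $z$, to a point $w\in\partial H$ at which $H$ is smooth, forcing small balls around $y_i$ to be Gromov--Hausdorff close to half-discs, in contradiction with the genuine cone singularity recorded by $T_{y_i}$. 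This is the delicate point, since it requires comparing two blow-up scales simultaneously.

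Granting such a neighborhood $V$, the endgame is clean. Form the metric double $\hat X$ of $\bar B_r(z)$, for $r$ small enough that $\bar B_r(z)\subset V$, across $\partial X$, with its doubling involution $\iota$ whose fixed-point set is $\partial X$. The tangent cone of $\hat X$ at $\hat z$ is the double of $H$, namely $\R^2$; at nearby doubled boundary points it is the double of a sector of angle in $[\pi-2\pi\eta,\pi]$, hence uniformly close to $\R^2$, while at interior points density close to $1$ gives, via volume almost-rigidity and the Bishop--Gromov monotonicity, Gromov--Hausdorff closeness to Euclidean balls at all small scales. A first application of the Cheeger--Colding--Reifenberg theorem inside $X$ supplies the interior manifold regularity feeding these estimates, and a second application, now to $\hat X$, shows that a neighborhood of $\hat z$ is homeomorphic to a $2$-disc. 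Finally $\iota$ acts as a topological involution on this disc fixing exactly $\partial X\cap V$; since $z$ lies in the one-dimensional stratum (its tangent cone $H$ has one-dimensional boundary), Smith theory identifies the fixed set with a tamely embedded arc through $\hat z$, and the two complementary half-discs are interchanged by $\iota$. The quotient, which is a neighborhood of $z$ in $X$, is therefore homeomorphic to $H$ with $z\in\partial H$, as claimed.
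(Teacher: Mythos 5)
Your overall architecture --- blow-up analysis, doubling along $\partial X$, applications of Cheeger--Colding--Reifenberg, and descending through the involution --- is exactly the paper's, and you have correctly located the main obstacle in the two-scale blow-up comparison. But the argument you offer for that obstacle does not close. Your ``diagonal blow-up'' is supposed to rule out interior points $y_i\to z$ with $b(y_i)\le 1-\delta$ by rescaling at $z$ and observing that small balls around $y_i$ become close to half-discs. This yields no contradiction: if $d(y_i,\partial X)\ll d(z,y_i)$, the rescaled points converge to a point $w\in\partial H$, the balls $B_{\epsilon\, d(z,y_i)}(y_i)$ have volume ratio only about $\pi/2$, and Bishop--Gromov then bounds $b(y_i)$ below only by roughly $\frac12$ --- perfectly consistent with $y_i$ being a cone point of density, say, $0.6$. (Note also that a cone over a circle of length $\pi$ and a half-plane have the same volume ratio at the tip, so volume monotonicity alone cannot separate these two models.) What is actually needed, and what the paper proves, is that interior points do \emph{not} limit onto $\partial H$ under blow-up at the scale $s_i=d(y_i,\partial X)$ (Lemma \ref{cor: regtobound}); the proof of this uses Perelman's stability theorem together with the Alexandrov curvature bound on $X\setminus\partial X$ already established in Proposition \ref{prop: regprop} and Corollary \ref{cor: inner} --- a topological input absent from your proposal. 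Only after that does the chain ``blow-up at scale $s_i$ is $H$ with base point at distance $1$ from $\partial H$, hence volume ratio $\to\pi$ at scale $s_i$, hence density $\to 1$ by monotonicity'' (Lemma \ref{lem: bounddisappear}) go through.

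A second, related gap sits in your application of Reifenberg to the double $\hat X$. For the doubled balls to be uniformly Gromov--Hausdorff close to Euclidean balls at all points and all scales, it is not enough that tangent cones at boundary points are sectors of angle close to $\pi$: you must know that under every blow-up at every nearby point and scale the half-ball in $X$ converges to a half-ball in $H$ \emph{and} that $\partial X$ converges to the straight line $\partial H$, so that the gluing in the double converges to the standard gluing producing $\R^2$. Ruling out, for instance, that a blow-up centered at boundary points is a cone over a circle of length $\pi$, or that $\partial X$ degenerates in the limit, is the content of Lemma \ref{lem: boundtobound} and the last clause of Corollary \ref{cor: combination}, again via a delicate two-scale argument; your proposal takes this for granted. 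Finally, the Smith-theory step at the end is unnecessary: once one knows, by a first application of Reifenberg to the one-dimensional set $\partial X$ (all of whose blow-ups are lines by Corollary \ref{cor: combination}), that $\partial X$ is locally an arc, the paper's elementary connectedness argument identifies $X\cap V$ with a half-plane.
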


Before we embark on the proof of this proposition, we explain why this
statement is sufficient to finish the proof of our main theorem.

\begin{lem}
 The validity of Proposition \ref{prop: final} implies that the set
 $X^{\delta}$ is strongly convex in $X$. Thus any geodesic $\gamma $ with endpoints  $x,y \in X^{\delta}$ is completely contained in $X^{\delta}$.	
\end{lem}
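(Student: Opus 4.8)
The plan is to argue by contradiction, first reducing the whole problem to the boundary. Let $\gamma$ be a geodesic with endpoints $x,y\in X^{\delta}$ and let $z=\gamma(t)$ be an interior point. If $z\notin\partial X$, then, since $z$ is an interior point of the geodesic $\gamma$, Lemma \ref{lem: nogeod} gives $T_zX\cong\R^2$, so $b(z)=1>1-\delta$ and hence $z\in X^{\delta}$. Thus it suffices to show that $\gamma$ never meets $\partial X$. In particular this already disposes of the discrete ``bad'' points of $(X\setminus\partial X)\setminus X^{\delta}$, which cannot lie on $\gamma$ at all, so the entire difficulty is concentrated on $\partial X$.

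So suppose, for contradiction, that $x_0=\gamma(t_0)\in\partial X$ is an interior point of $\gamma$. Being an interior point of a geodesic, $x_0$ satisfies, by Lemma \ref{lem: nogeod}, that $T_{x_0}X$ is the Euclidean half-plane, whence $b(x_0)=\frac12$; Proposition \ref{prop: final} then provides a neighborhood $U\cong H$ of $x_0$ with $x_0\in\partial H$. Moreover the splitting used in the proof of Lemma \ref{lem: nogeod} identifies the line tangent to $\gamma$ at $x_0$ with the line factor of $T_{x_0}X=\R\times[0,\infty)$, that is, with the boundary line $\partial H$. Hence $\gamma$ approaches $x_0$ tangentially to the boundary, entering and leaving $x_0$ along the two opposite directions of $\partial H$, while its nearby points $a=\gamma(t_0-\epsilon)$ and $b=\gamma(t_0+\epsilon)$ are interior (density $1$) points of $X^{\delta}$ contained in $U$.

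The main step, and the step I expect to be the real obstacle, is to turn this tangential crossing into a branching of geodesics. The natural device is reflection across the boundary: by Proposition \ref{prop: regprop} the interior of $U$ is Alexandrov of curvature $\ge\kappa$, so a neighborhood of $x_0$ should be an Alexandrov space with boundary $\partial X\cap U$, and its double across that boundary is again Alexandrov, hence non-branching. In that double, writing $b'\neq b$ for the mirror image of the interior point $b$, the reflection symmetry gives $d(a,x_0)+d(x_0,b')=d(a,x_0)+d(x_0,b)=d(a,b)=d(a,b')$, the last equality because any competitor path from $a$ to $b'$ crosses the fixed boundary and so has length $\ge d(a,b)$, with equality realized by crossing at $x_0$. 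Therefore $a\to x_0\to b$ and $a\to x_0\to b'$ are two geodesics sharing the nondegenerate initial segment $[a,x_0]$ and then separating, i.e.\ a branching pair, contradicting non-branching. The points on which I would spend the most care are exactly the ones I have glossed: verifying that $U$ is genuinely Alexandrov up to and including its boundary arc so that Perelman's doubling applies, and checking that the reflected concatenation is an honest geodesic; alternatively, and more in the spirit of the present setting, one would try to exhibit the branching directly inside $X$ and invoke the non-branching of RCD spaces \cite{Rajala-Sturm}. Either way, the resulting contradiction forces $\gamma\cap\partial X=\emptyset$, which together with the first paragraph shows $\gamma\subset X^{\delta}$.
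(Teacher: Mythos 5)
Your reduction to the boundary is exactly the paper's first step, and the use of Lemma \ref{lem: nogeod} to get $b(x_0)=\tfrac12$ and then Proposition \ref{prop: final} to get the half-plane neighborhood is also as in the paper. The divergence, and the genuine gap, is in your main step. Your branching construction takes place in the metric double of $U$ across $\partial X\cap U$, and its contradiction requires that double to be non-branching; you justify this by asserting that a neighborhood of $x_0$ ``should be an Alexandrov space with boundary $\partial X\cap U$'' so that Perelman's doubling theorem applies. But at this stage of the argument the curvature bound is only known on $X^{\delta}$ (Proposition \ref{prop: regprop}) and, after the interior-point analysis, on $X\setminus\partial X$; establishing the Alexandrov property \emph{at} points of $\partial X$ is precisely what the remainder of the proof of Theorem \ref{thm: main} is for, so assuming it here is circular. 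Without that input, the double of a merely topological half-plane neighborhood has no reason to be non-branching, and the reflected concatenation $a\to x_0\to b'$ versus $a\to x_0\to b$ yields no contradiction. (A smaller issue: you take an arbitrary interior point $x_0\in\gamma\cap\partial X$, so the segment $[a,x_0]$ may contain further boundary points; the paper avoids this by taking $z$ to be the point of $X\setminus X^{\delta}$ on $\gamma$ \emph{closest} to $x$, so that the sub-geodesic $\gamma^+$ from $x$ to $z$ meets $\partial X$ only at $z$.)

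The alternative you mention in passing --- exhibiting the branching directly in $X$ and invoking non-branching of RCD spaces --- is what the paper actually does, and it needs only the topological half-plane structure, not any curvature information at the boundary. Concretely: $\gamma^+$ meets $\partial H$ only at $z$ and hence separates a small $U\cong H$ into two components; fix $q$ on $\gamma$ just beyond $z$ and let $U^+$ be the component of $U\setminus\gamma^+$ not containing $q$. Any geodesic from $q$ to a point $m\in U^+$ near $z$ must cross $\gamma^+$ at some $p$, and since $q,z,p$ lie in this order on $\gamma$, the triangle equality $d(q,m)=d(q,p)+d(p,m)\ge d(q,z)+d(z,m)\ge d(q,m)$ forces a geodesic from $q$ to $m$ through $z$. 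This holds for the fixed $q$ and an \emph{open} set of points $m$, which contradicts the essentially non-branching property of the RCD space $X$ itself (\cite{GRS}). To repair your proof you would need to replace the doubling step by an argument of this kind.
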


 \begin{proof}
 	Assume the contrary and consider  a point $z\in  X\setminus X^{\delta}$ on $\gamma $ closest to the point $x$.

 	Due to Lemma \ref{lem: nogeod}, the point $z$ is  contained in $\partial X$ and satisfies $b(z)=\frac 1 2$. Applying Proposition
 	\ref{prop: final} we find a neighborhood $U$ of $z$ homeomorphic   to  $H$, such that $z$ lies on the boundary line  $\partial H$.
 	
 	The part $\gamma ^+$ of the geodesic $\gamma $ between $x$ and $z$ is contained in $X^{\delta}$ (up to the point $z$) hence it intersects $\partial H$ only in the point $z$.
 	By choosing the neighborhood $U$ smaller we can therefore assume that $\gamma ^+$ separates $U$ into two components.
 	
 	Fix a point $q$ on the part of $\gamma$ between $z$ and $y$ sufficiently close to $z$ and let $U^+$ be the component of $U\setminus \gamma ^+$ which does not contain $q$. Thus, for any point $m \in U^+$, sufficiently close to $z$,
	%in that component
 %	of $U\setminus \gamma ^+$ which does not contain $q$,
any geodesic  	between $q$ and $m$  intersects $\gamma ^+$.
 	
 	 	In particular, the point  $z$ lies on a geodesic between $m$ and $q$.  This statement, valid for the fixed  point  $q$ and an open set of points $m$,
 	contradicts the essentially non-branching property of the RCD  space $X$, \cite[Theorem 1.2, Corollary 1.4]{GRS}.
 \end{proof}
 	
 Therefore, the validity of Proposition \ref{prop: final} implies that the space $X^{\delta}$ is a geodesic space. Since $X$ is the completion of $X^{\delta}$, we would deduce from \cite{Petrunincomplete} that $X$ is an Alexandrov space with curvature $\geq \kappa$ and finish the proof of Theorem \ref{thm: main}.

\subsection{Interior points}
Consider a point  $x \in  X \setminus \partial X$. Assume that $x$ is not contained in $X^{\delta}$.
By Lemma \ref{lem: delta}, we find some $r>0$ such that $B_{2r} (x) \setminus X^{\delta}$ contains only the point $x$.  By Proposition \ref{prop: regprop}, $B_{2r} (x) \setminus \{x \}$ has curvature $\geq \kappa$ in the sense of Alexandrov.

 Due to Lemma \ref{lem: nogeod},
 for any $y,z\in B_r (x) \setminus \{x\}$, any geodesic connecting $y$ and $z$ in $X$ is contained in  $B_{2r} (x) \setminus \{x \}$.
 Toponogov's globalization theorem in the version of \cite{Petrunincomplete}, now shows that  $B_{r} (x) \setminus \{x \}$
satisfies the $(1+3)$-point comparison.   By continuity, $B_r(x)$ satisfies the $(1+3)$-point comparison as well.

We have just verified:
\begin{cor} \label{cor: inner}
The subspace $X\setminus \partial X$ of $X$ has curvature $\geq \kappa$.
\end{cor}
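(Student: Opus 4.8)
The plan is to reduce the statement to Proposition \ref{prop: regprop} together with Toponogov's globalization theorem, handling separately the two kinds of points of $X\setminus\partial X$. A point $x\in X\setminus\partial X$ is either $\delta$-regular, in which case Proposition \ref{prop: regprop} already supplies a neighborhood of curvature $\geq\kappa$ and there is nothing more to do, or it lies in the complement $(X\setminus\partial X)\setminus X^{\delta}$. By Lemma \ref{lem: delta} this complement is discrete in $X\setminus\partial X$, so in the second case I would fix $r>0$ with $B_{2r}(x)\setminus X^{\delta}=\{x\}$; then by Proposition \ref{prop: regprop} the punctured ball $B_{2r}(x)\setminus\{x\}$ has curvature $\geq\kappa$ in the sense of Alexandrov.

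The first substantive step is to argue that no geodesic between two points of $B_r(x)\setminus\{x\}$ can pass through $x$. Indeed, if $x$ were an interior point of a geodesic, Lemma \ref{lem: nogeod} would force $T_xX$ to be isometric to $\R^2$, hence $b(x)=1$ and $x\in X^{\delta}$, contradicting the choice of $x$. Consequently every geodesic joining two points of $B_r(x)\setminus\{x\}$ stays inside the punctured ball $B_{2r}(x)\setminus\{x\}$, so the latter is an (incomplete) space of curvature $\geq\kappa$ in which any two points of the smaller ball are joined by a geodesic.

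At this point I would invoke Toponogov's globalization theorem in the form of \cite{Petrunincomplete}, which upgrades the local curvature bound on the punctured ball to the global $(1+3)$-point comparison for the subset $B_r(x)\setminus\{x\}$. The last step is to pass from the punctured ball to the full ball: the $(1+3)$-point comparison is a closed condition on quadruples of points (the comparison angles depend continuously on the points where they are defined, and the defining inequality $\leq 2\pi$ survives limits, while a degenerating angle simply becomes undefined). Since every quadruple in $B_r(x)$ is a limit of quadruples in $B_r(x)\setminus\{x\}$, the comparison extends by continuity to $B_r(x)$ itself. Thus every point of $X\setminus\partial X$ has a neighborhood satisfying the $(1+3)$-point comparison in which any two points are joined by a geodesic, which is exactly the definition of curvature $\geq\kappa$.

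I expect the main obstacle to be the application of the globalization theorem to the incomplete, punctured space: one must check that the hypotheses recorded before Proposition \ref{prop: regprop}---compactness of the relevant closed balls and geodesic connectedness of $B_r(x)\setminus\{x\}$---are genuinely available once $x$ has been deleted, and that the missing point does not obstruct the conclusion. The geodesic-avoidance argument via Lemma \ref{lem: nogeod} is precisely what guarantees that removing $x$ costs nothing, and the continuity of the comparison condition is what permits the single excised point to be reinstated at the very end.
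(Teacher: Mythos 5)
Your proposal is correct and follows essentially the same route as the paper: isolate the non-$\delta$-regular point via Lemma \ref{lem: delta}, use Lemma \ref{lem: nogeod} to see that geodesics between points of the punctured ball avoid $x$, apply Toponogov's globalization theorem from \cite{Petrunincomplete} to the punctured ball, and recover the full ball by continuity of the $(1+3)$-point comparison. The only difference is that you spell out explicitly why Lemma \ref{lem: nogeod} forces geodesics to miss $x$ (the density would have to be $1$), a step the paper leaves implicit.
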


\subsection{Setting} We now fix a point $z\in \partial X$ with $b(z)=\frac 1 2$.  We consider a sequence of points $x_i \in X$ converging to $z$ and
a sequence $r_i$ of positive numbers converging to $0$. After choosing a subsequence we may and will assume
that the blow-up $(Y,y)=\lim (\frac 1 {r_i} X,x_i)$ exists.

  Furthermore, we consider the sequence of non-negative numbers $s_i=d(x_i,\partial X)$. By choosing a further subsequence, we may and will assume that the  following limit exists:
  $$A:=\lim _{i\to \infty} \frac {s_i} {r_i} \in [0,\infty]\,.$$
As we have seen in Section \ref{sec: 2}, any ball of any radius $t$ in the blow-up $Y$ has $\mathcal H^2$-measure at least $\frac {\pi}  2 t^2$.  From  the volume-cone rigidity, \cite{Volumerig}, and Lemma  \ref{lem: boundclosed} we deduce:
\begin{lem} \label{lem: boundexist}
	 The $RCD(0,2)$ space $Y$ can have non-empty boundary only if $Y$ is isometric to the flat half-plane $H$.
\end{lem}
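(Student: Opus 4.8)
The plan is to exploit the volume lower bound \eqref{eq: limitvol} to force a rigid situation at an arbitrary boundary point. Suppose $\partial Y \neq \emptyset$ and fix $w \in \partial Y$. Since $Y$ is itself a non-collapsed $RCD(0,2)$ space, all the structure results of Section \ref{sec: 2} apply verbatim to $Y$; in particular $w$ has a well-defined density, which we denote $b_Y(w)$, and Lemma \ref{lem: boundclosed} applied to $Y$ gives $b_Y(w) \leq \frac{1}{2}$. On the other hand, the blow-up inequality \eqref{eq: limitvol} (with $b(z)=\frac{1}{2}$ for our fixed point $z \in \partial X$) yields $\mathcal{H}^2(B_s(w)) \geq \frac{\pi}{2} s^2$ for every $s \geq 0$. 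Dividing by $\pi s^2$ and letting $s \to 0$ shows $b_Y(w) \geq \frac{1}{2}$, hence $b_Y(w) = \frac{1}{2}$.

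Next I would upgrade this equality into a \emph{constant} volume ratio. By the Bishop--Gromov property in the $RCD(0,2)$ space $Y$, the function $s \mapsto \mathcal{H}^2(B_s(w))/(\pi s^2)$ is non-increasing, and its limit as $s \to 0$ equals $b_Y(w) = \frac{1}{2}$. Comparing this monotone upper control with the pointwise lower bound $\mathcal{H}^2(B_s(w)) \geq \frac{\pi}{2} s^2$ from \eqref{eq: limitvol}, the ratio is squeezed to the constant value $\frac{1}{2}$ for all $s$.

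The main step is then to invoke volume-cone rigidity, \cite{Volumerig}: a constant volume ratio centered at $w$ forces $Y$ to be isometric to a Euclidean cone $CW$ with vertex $w$. Since cones are self-similar, the tangent cone of $Y$ at $w$ is again $CW = Y$; because $w \in \partial Y$, this tangent cone is homeomorphic to a half-plane, so by the classification recalled in Section \ref{sec: 2} (via \cite{lowdimRCD}) the link $W$ is an interval rather than a circle. Its length is pinned down by the density normalization $\mathcal{H}^1(W) = 2\pi \cdot b_Y(w) = \pi$. A Euclidean cone over an interval of length $\pi$ is isometric to a planar sector of opening angle $\pi$, that is, to the flat half-plane $H$, which gives the claim.

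I expect the identification of the cone to be the only point needing care. Both an interval of length $\pi$ and a circle of length $\pi$ have diameter at most $\pi$, so the length and diameter constraints alone do not distinguish them; it is genuinely the boundary hypothesis $w \in \partial Y$, forcing the self-similar tangent cone to be a half-plane topologically, that excludes the circle link (which would instead produce a flat cone with a cone singularity of total angle $\pi$). The squeezing argument and the application of \cite{Volumerig} are by contrast routine.
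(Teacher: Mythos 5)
Your proposal is correct and follows exactly the route the paper takes: the paper's (one-sentence) derivation of Lemma \ref{lem: boundexist} combines the volume lower bound $\mathcal H^2(B_t(w))\geq \frac{\pi}{2}t^2$ from \eqref{eq: limitvol}, the density bound $b_Y(w)\leq\frac12$ at boundary points from Lemma \ref{lem: boundclosed}, and volume-cone rigidity, which is precisely your squeezing argument. Your write-up merely makes explicit the details (constancy of the Bishop--Gromov ratio, self-similarity of the cone at its vertex, and the exclusion of the circle link via the topological boundary condition) that the paper leaves to the reader.
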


Our next aim is to show that (non) boundary points of $X$ converge to (non) boundary points  in the blow-up $Y$. After that we will show that the blow-up $Y$ is isometric to a plane or a half-plane.

\subsection{Stability of the boundary}
In the previous notation we are going to show:
\begin{lem} \label{cor: regtobound}
If $A >0$ then the point $y$ is not  contained in $\partial Y$.
\end{lem}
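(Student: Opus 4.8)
The plan is to show that, when $A>0$, a fixed-size neighborhood of $y$ in $Y$ is a non-collapsing blow-up limit of the \emph{boundary-free} interior region of $X$, and therefore contains no boundary points at all. In particular $y$ itself will turn out to be an interior point. Thus I would reduce the statement to the stability of the (empty) boundary under non-collapsing two-dimensional limits of spaces with curvature bounded below, and I would not even need to know in advance that $Y$ is a half-plane.

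First I would fix a number $t$ with $0<t<A$ (any finite $t$ if $A=\infty$). Since $s_i/r_i\to A>t$ and $\partial X$ is closed by Lemma \ref{lem: boundclosed}, we have $d(x_i,\partial X)=s_i>t\cdot r_i$ for all large $i$, so the closed ball $\bar B_{t\cdot r_i}(x_i)$ is compactly contained in the open set $X\setminus\partial X$. By Corollary \ref{cor: inner} this set has curvature $\geq\kappa$, and each of its points is an interior point, whose space of directions is a circle; hence $\bar B_{t\cdot r_i}(x_i)$ is a two-dimensional region of curvature $\geq\kappa$ with empty boundary. After rescaling by $1/r_i$, the ball $\bar B_{t}(x_i)\subset\tfrac1{r_i}X$ has curvature $\geq\kappa r_i^2$ and still empty boundary, and it converges to $\bar B_{t}(y)\subset Y$ under $(\tfrac1{r_i}X,x_i)\to(Y,y)$. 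Since $\kappa r_i^2\to 0$ and the $(1+3)$-point comparison of subsets is stable under Gromov--Hausdorff convergence, the geodesic ball $\bar B_t(y)$ satisfies the $(1+3)$-point comparison with $\kappa=0$, so $B_t(y)$ has curvature $\geq 0$. Because $Y$ is a non-collapsed $RCD(0,2)$ space, this convergence is two-dimensional and non-collapsing, so applying stability of the boundary I would conclude that $\bar B_t(y)$ has empty boundary as well. Then the space of directions at $y$ is a circle, i.e.\ $T_yY$ is a cone over a circle and not over an interval, giving $y\notin\partial Y$.

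The single nontrivial input, and the step I expect to be the main obstacle, is this last stability of the boundary: one must rule out that interior points (all spaces of directions circles) converge to a boundary point (space of directions an interval). Densities and volumes cannot detect the difference, since a metric cone over a circle of length $\pi$ and the flat half-plane both have density $\tfrac12$ and identical volume growth; the distinction is purely topological, namely that small metric spheres are circles around interior points and arcs around boundary points, and this topological type is preserved in non-collapsing two-dimensional limits (Perelman's stability theorem, elementary for surfaces). As a packaged alternative, assuming $y\in\partial Y$ one could instead invoke Lemma \ref{lem: boundexist} to get $Y\cong H$: the rigidity of $H$ makes $\limsup\tfrac1{r_i}\partial X\subseteq\partial H$ immediate from lower semicontinuity of the density (boundary points have $b\leq\tfrac12$, and in $H$ this forces them onto $\partial H$), while the reverse inclusion $\partial H\subseteq\liminf\tfrac1{r_i}\partial X$, which would produce a boundary point of $X$ within distance $<A$ of $x_i$ and contradict $d(x_i,\partial X)=s_i$, is exactly the same topological stability statement in disguise.
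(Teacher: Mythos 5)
Your proposal is correct and follows essentially the same route as the paper: use $A>0$ to place a ball of definite radius (at scale $r_i$) around $x_i$ inside the boundary-free region, invoke Corollary \ref{cor: inner} for the Alexandrov curvature bound there, and conclude via Perelman's stability theorem that the non-collapsed two-dimensional limit cannot acquire a boundary point at $y$ --- which is exactly the ``topological stability of the boundary'' step you correctly single out as the crux. The only detail you leave implicit is the one the paper spells out: since metric balls are not themselves complete Alexandrov spaces, one first extracts compact convex subsets $C_i\subset B_i$ containing a definite ball around $x_i$ (via \cite[Theorem 7.1.3]{Petrunin-semi}) before applying Perelman's theorem to identify them topologically with a limit set containing a half-plane neighborhood of $y$.
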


\begin{proof}
	Assume the contrary. By Lemma \ref{lem: boundexist}, $Y$ is isometric to the half-plane $H$ and $y\in \partial H$.
	
	By  rescaling, we may assume that $A>9$. Thus, for all $i$ large enough, the compact ball $\hat B_i$ in $X_i=\frac 1 {r_i} X$ of radius $6$ around $x_i$ does not contain boundary points.  Due to Corollary \ref{cor: inner}, the open ball $B_i$ of radius $3$ around $x_i$ in $X_i$ satisfies the $(1+3)$-point comparison, see Section \ref{sec: 6} and \cite{Petrunincomplete}.

 A contradiction to the fact that the surfaces without boundary $B_i$ converge to a surface with boundary
(the ball in $H$ around the limit point $y$) can now be deduced in several ways.
We choose a way relying on   (the simplest case of) Perelman's topological stability theorem, \cite{P2}, \cite{Kap}.

From   \cite[Theorem 7.1.3]{Petrunin-semi} we deduce the existence of some $\delta >0$  and closed compact convex subsets $C_i\subset B_i$ containing
the ball $B_{\delta } (x_i) \subset B_i$.  The spaces $C_i$ are compact $2$-dimensional Alexandrov spaces converging (after choosing a subsequence) to an Alexandrov space $C\subset H$ which  contains the ball $B_{\delta } (y) \subset H$.
By Perelman's stability theorem  \cite{P2}, \cite{Kap},  for all large $i$, there exists a homeomorphism $\Phi_i:C_i\to C$ close to the identity.  Since $B_{\delta} (x_i)$ is a $2$-manifold without boundary, we deduce that $y$ must have in $Y$ a neighborhood  homeomorphic to a $2$-manifold without boundary which is impossible.

This contradiction finishes the proof.
\end{proof}

%\subsection{Regular points at the boundary and points nearby}
%We fix a point $z\in \partial X$ with $b(z)=\frac 1 2$.

%	We denote a point $z\in \partial X$ as boundary-regular if $b(z)=\frac 1 2$.
%	We know, that this happens if and only if  $T_zX$ is isometric to the Euclidean half-plane.
	
%	Similarly to the previous subsection, we claim:
	\begin{lem} \label{lem: boundtobound}
		If $A=0$ then $Y$ is isometric to the  half-plane $H$ and $y$ is on the boundary $\partial H$.
	\end{lem}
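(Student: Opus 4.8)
The plan is to reduce everything, via Lemma \ref{lem: boundexist}, to the single assertion that $\partial Y\neq\emptyset$. Since $A=0$ and $\partial X$ is closed (Lemma \ref{lem: boundclosed}), I would first choose points $z_i\in\partial X$ realizing $d(x_i,z_i)=s_i$, so that $d_{X_i}(x_i,z_i)=s_i/r_i\to 0$ and hence $z_i\to y$. Because $b(z_i)\leq\tfrac12$ and densities are lower semicontinuous along the convergence $(X_i,z_i)\to(Y,y)$, one gets $b_Y(y)\leq\tfrac12$; combined with the lower volume bound $\mathcal H^2(B_s(y))\geq\tfrac\pi2 s^2$ from \eqref{eq: limitvol}, this forces $b_Y(y)=\tfrac12$ and $b(z_i)\to\tfrac12$. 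Monotonicity of $s\mapsto\mathcal H^2(B_s(y))/(\pi s^2)$, whose limit at $0$ is $b_Y(y)=\tfrac12$, together with the same lower bound, upgrades this to the exact identity $\mathcal H^2(B_s(y))=\tfrac\pi2 s^2$ for all $s$. Volume-cone rigidity \cite{Volumerig} then yields $Y=C(W)$ with vertex $y$, and \cite{lowdimRCD} identifies $W$ as a circle or an interval of length $\pi$. In particular $Y$ is an Alexandrov space of curvature $\geq 0$, and it remains only to exclude the case that $W$ is a circle: if $W$ is an interval then $Y=H$ and its vertex $y$ lies on $\partial H$, as required.

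To exclude the cone angle $\pi$ I would upgrade the convergence to Alexandrov geometry and invoke topological stability, paralleling the proof of Lemma \ref{cor: regtobound}. By Corollary \ref{cor: inner} the interior $X_i\setminus\partial X_i$ has curvature $\geq\kappa r_i^2=:\kappa_i\to 0$, and it is dense since $X^\delta$ is dense; as the $(1+3)$-point comparison is a closed condition on the mutual distances of quadruples, it should extend across the nowhere-dense boundary, making $X_i$ itself Alexandrov of curvature $\geq\kappa_i$. Then $(X_i,x_i)\to(Y,y)$ is a non-collapsed convergence of two-dimensional Alexandrov spaces. Since such spaces are topological surfaces with boundary, the boundary being exactly the points with half-plane tangent cone, a circle $W$ would make $y$ an interior manifold point of $Y$; Perelman's stability theorem \cite{P2}, \cite{Kap} would then give, for large $i$, a homeomorphism from a fixed neighborhood of $x_i$ in $X_i$ onto a neighborhood of $y$ in $Y$ which is a manifold \emph{without} boundary. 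But that neighborhood contains $z_i\in\partial X_i$, a manifold-boundary point, which is a contradiction. Hence $W$ is an interval, $Y=H$, and $y\in\partial H$.

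The hard part will be the Alexandrov upgrade of $X_i$. Extending the $(1+3)$-comparison from the dense interior to the points of $\partial X_i$ is delicate, because the intrinsic metric of $X_i\setminus\partial X_i$ could a priori exceed $d_{X_i}$ should geodesics between interior points pass through $\partial X_i$; controlling this is essentially the local topological statement of Proposition \ref{prop: final} itself, so one must argue with care to avoid circularity. I would try to sidestep the issue by exploiting that the limit $Y=C(W)$ is \emph{already} known to be Alexandrov and proving stability of the boundary directly, or by first securing a local half-disc structure at $z_i$ on the scale $s_i$, where $\tfrac1{s_i}X$ based at $z_i$ is close to the half-plane $H$. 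In any case, once any form of boundary stability is established, the identification $Y=H$ together with $y\in\partial H$ follows at once from Lemma \ref{lem: boundexist} and the computation $b_Y(y)=\tfrac12$.
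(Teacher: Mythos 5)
Your first paragraph is correct and coincides with the paper's opening move: pick $z_i\in\partial X$ with $d(x_i,z_i)=s_i$, note $z_i\to y$ since $A=0$, use semicontinuity of densities to get $b(y)\leq\frac12$, and combine with the lower bound \eqref{eq: limitvol} and Bishop--Gromov monotonicity to force $\mathcal H^2(B_s(y))=\frac\pi2 s^2$ for all $s$, whence volume-cone rigidity gives $Y=C(W)$ with $W$ a circle or an interval of length $\pi$. The problem is that everything after that --- excluding the circle --- is the actual content of the lemma, and your proposal does not deliver it. The Alexandrov upgrade of $X_i$ across $\partial X_i$ is not a technicality you can defer: Corollary \ref{cor: inner} only gives the $(1+3)$-comparison locally in $X\setminus\partial X$, and to globalize it to a ball of definite size around $x_i$ you need geodesics between interior points to avoid $\partial X$, which is exactly what the paper can only prove \emph{after} Proposition \ref{prop: final} via the non-branching argument. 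You acknowledge this circularity yourself, but the two escape routes you sketch are also circular: ``proving stability of the boundary directly'' is the statement of the lemma (boundary points $z_i$ converging to a boundary point of $Y$), and ``securing a local half-disc structure at $z_i$ on the scale $s_i$'' is a blow-up at boundary base points, i.e.\ the case $A=0$ again. The closing sentence ``once any form of boundary stability is established, the identification follows at once'' reduces the lemma to itself.

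For contrast, the paper excludes the circle without any topological control of $X$ near $\partial X$, by a second blow-up at a different scale. If $Y\setminus\{y\}$ were locally Euclidean, semicontinuity of densities would force $\partial X\cap\left(B_{2r_i}(z_i)\setminus B_{r_i}(z_i)\right)=\emptyset$ for large $i$; setting $K_i=\partial X\cap\bar B_{r_i}(z_i)$, $\hat K_i=\partial X\setminus K_i$ and $t_i=d(K_i,\hat K_i)\geq r_i$, one blows up at scale $t_i$ based at points $k_i\in K_i$ realizing this distance. Volume rigidity plus the presence of a second non-Euclidean point $\hat k$ at distance $1$ forces this blow-up to be the half-plane $H$ with the geodesic from $k$ to $\hat k$ in $\partial H$; but the midpoints $m_i$ of the geodesics $k_i\hat k_i$ satisfy $d(m_i,\partial X)=\frac{t_i}2$, so by Lemma \ref{cor: regtobound} (the case $A>0$) their limit cannot lie on the boundary of the blow-up --- a contradiction. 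This uses only Lemma \ref{cor: regtobound} and volume-cone rigidity, which is precisely the input your argument is missing. As written, your proof has a genuine gap at its central step.
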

	
	\begin{proof}
%	To prove the first claim,	it is enough to show that any limit $(Y,y)$ of a subsequence of  $(\frac 1 r_i X,z_i)$  is isometric to $H$.
  Consider points $z_i \in \partial X$ with $d(z_i,x_i)=s_i =d(\partial X,x_i)$.  The assumption $A=0$ implies that the points
  $z_i \in X_i =\frac 1 {r_i} X$ converge to the same point $y \in Y$.

  Since $z_i \in \partial X$, we see that the density $b(y)$ at $Y$ is at most $\frac 1 2$.  The volume-cone rigidity argument implies that
  $b(y)=\frac 1 2$ and $Y$ is isometric to  the Euclidean cone $T_yY$.
  It remains to show that $T_yY$  cannot be the Euclidean cone over the circle of length $\pi$.

		We assume that $Y$ is a Euclidean cone over a circle and are going to derive a contradiction.
		
		For all $i$ large  enough, there exist no points $p_i$ in $\partial X$  with
		$r_i<d(p_i,z_i) <2r_i$, since $Y\setminus \{y\}$ is locally Euclidean. Denote by $K_i$ the set $\partial X\cap \bar B_{r_i } (z_i)$ and by $\hat K_i$ the complement $\partial X\setminus K_i$.  
		Then, for all $i$ large enough, $K_i$ is compact and $\hat K_i$
		is closed in $\partial X$. Moreover,
		$$d(z,z_i)\geq t_i:=d(K_i,\hat K_i)\geq r_i \,.$$
		Consider points $k_i\in K_i$ and $\hat k_i \in \hat K_i$ realizing the distance between $K_i$ and $\hat K_i$
		and take the blow-up (choosing a subsequence)
		$(\hat Y , k)=\lim (\frac 1 {t_i} X,k_i)$.
		
		As before, the volume-cone rigidity implies that $\hat Y$  is  either a half-plane or a cone over a circle. However, the points $\hat k_i$ converge (after taking a subsequence) to a non-Euclidean point $\hat k\in \hat Y$ with distance $1$ to $k$.  Therefore,  $\hat Y$ must be isometric to $H$ and  the
		geodesic between $k$ and $\hat k$ must lie on the boundary $\partial H$.
		
		Hence, the midpoints $m_i$ of any geodesic between $k_i$ and $\hat k_i$ in $X$ converge to a point on the boundary of $\hat Y$.
		
		But, by construction, the point $m_i$ in $X$ has distance $\frac {t_i} 2$ from $\partial X$.  We obtain a contradiction
		with Lemma  \ref{cor: regtobound} and finish the proof.
		% of the first claim.
		\end{proof}

\begin{lem} \label{lem: bounddisappear}
If $A=\infty$ then $Y$ is isometric to $\R^2$.
%Let the point $z$ be as  above.  Let  a sequence of points $x_i \in X$ converge to $z$ and the  sequence of  positive numbers $r_i$ converge  to $0$. If
%$$\lim _{i\to \infty} \frac {r_i} {d(x_i, \partial X)} =0$$
%then the sequence $(\frac 1 r_i X,x_i)$ converges to $\R^2$. 	
\end{lem}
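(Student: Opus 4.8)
The plan is to exploit the same volume-cone rigidity machinery used in Lemmas~\ref{lem: boundtobound} and~\ref{cor: regtobound}, but now in the regime where the boundary of $X$ recedes to infinity under blow-up. Since $A=\infty$ means $s_i/r_i\to\infty$, the rescaled base points $x_i\in X_i=\frac{1}{r_i}X$ have distance $s_i/r_i\to\infty$ to $\partial X$. Consequently, for any fixed radius $R>0$, the ball $\bar B_R(x_i)\subset X_i$ eventually contains no boundary points, so by Corollary~\ref{cor: inner} it satisfies the $(1+3)$-point comparison for all large $i$. Passing to the limit, the blow-up $Y$ satisfies the $(1+3)$-point comparison on every ball, hence $Y$ is an Alexandrov space of curvature $\geq 0$.

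First I would record that $Y$ is a non-collapsed $RCD(0,2)$ space which, by the tangent-cone analysis of Section~\ref{sec: 2}, is a Euclidean cone $CZ$ over a circle or an interval. The crucial point is that $Y$ cannot have boundary: if it did, Lemma~\ref{lem: boundexist} would force $Y\cong H$, but then the boundary line $\partial H$ would consist of points that are limits of points in $X_i$ lying at bounded $X_i$-distance from $\partial X$, whereas every point of $\bar B_R(x_i)$ has $X_i$-distance to $\partial X$ tending to infinity. More directly, by Lemma~\ref{cor: regtobound} the limit base point $y$ is itself not a boundary point whenever the rescaled distance to $\partial X$ stays positive, and here it is infinite. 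So $\partial Y=\emptyset$ and $Y$ is a cone over a circle.

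It then remains to upgrade ``cone over a circle of length $2\pi b(z)=\pi$'' to ``isometric to $\R^2$'', i.e.\ to rule out a genuine cone singularity. Here I would invoke the lower density bound \eqref{eq: limitvol}: every ball of radius $s$ in $Y$ has $\mathcal H^2$-measure at least $\frac{\pi}{2}s^2$, reflecting $b(z)=\frac12$. But $Y$ is also an Alexandrov space of curvature $\geq 0$, so by Bishop--Gromov comparison in the nonnegatively curved cone the area of balls grows \emph{at most} like that of $\R^2$, namely $\pi s^2$, with equality forcing the cone angle to be $2\pi$. The interplay of the curvature-$\geq 0$ upper bound on volume growth with the absence of boundary should pin the cone angle; combined with the fact that $y$ is an interior point (not a boundary point) whose tangent cone, being a blow-up of the Alexandrov space $Y$, is again a cone, the nonnegative curvature and the regularity of interior points along $\gamma$ force the cross-section circle to have length exactly $2\pi$, so $Y\cong\R^2$.

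The main obstacle I anticipate is justifying that the cone $Y$ has \emph{no} singular cone point while carrying density exactly $\frac12$. The subtlety is that $b(z)=\frac12$ is the density of the boundary point $z\in X$, but after blow-up with $A=\infty$ the boundary has disappeared, so the density at the limit base point $y$ need not equal $\frac12$ a priori; I expect one must argue via semicontinuity of densities (Section~\ref{sec: 2}) together with the fact that points of $X_i\setminus\partial X$ approaching an interior point have density tending to $1$, exactly as in the proof of Lemma~\ref{lem: boundclosed}. Reconciling ``$y$ is an interior point with density $1$'' against the presence of $z$ with density $\frac12$ nearby is the delicate bookkeeping step, and getting the correct normalization of the limiting cone angle is where care is required.
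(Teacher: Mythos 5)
There is a genuine gap at the decisive step. Everything you actually establish about $Y$ --- non-collapsed $RCD(0,2)$, Alexandrov curvature $\geq 0$ (via Corollary \ref{cor: inner} and globalization on the boundary-free balls around $x_i$), empty boundary, and the lower volume bound \eqref{eq: limitvol} with constant $b(z)=\tfrac12$ --- is equally satisfied by the Euclidean cone over a circle of length $\tfrac{3\pi}{2}$: that cone has nonnegative curvature, no boundary, and every ball of radius $s$ has measure at least $\tfrac{3\pi}{4}s^2>\tfrac{\pi}{2}s^2$. So nothing in your list ``pins the cone angle'' to $2\pi$; the appeal to Bishop--Gromov ``with equality'' is vacuous because your lower bound is $\tfrac{\pi}{2}s^2$, not $\pi s^2$. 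The fallback you sketch (semicontinuity of densities, $b(x_i)\to 1$) cannot close this: semicontinuity here only yields $\liminf b(x_i)\geq\tfrac12$, and even if $b(x_i)\to 1$ were known it would not help, since Bishop--Gromov makes $r\mapsto b(x_i,r)/(\pi r^2)$ non-increasing, so the infinitesimal density bounds $b(x_i,tr_i)$ only from \emph{above}, whereas what is needed is the lower bound $\mathcal H^2(B_t(y))\geq\pi t^2$. A secondary issue: you assert $Y$ is a Euclidean cone ``by the tangent-cone analysis of Section 2,'' but that analysis concerns tangent cones at a fixed point; a blow-up along moving base points is not a priori a cone, and in the paper coneness is always obtained a posteriori from volume-cone rigidity once the volume of balls about the base point is known at all scales.

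The paper supplies exactly the missing volume information by a two-scale comparison, which is the idea absent from your proposal. Blowing up at the intermediate scale $s_i=d(x_i,\partial X)$, Lemmas \ref{cor: regtobound} and \ref{lem: boundtobound} (i.e.\ Corollary \ref{cor: combination} with ratio $1$) show the limit is the half-plane $H$ with base point at distance $1$ from $\partial H$, whence $\mathcal H^2(B_{s_i}(x_i))/s_i^2\to\pi$. Since $A=\infty$ gives $t\,r_i\ll s_i$, the Bishop--Gromov monotone quantity at scale $t\,r_i$ is at least its value at scale $s_i$, which tends to $1$; together with the universal upper bound this forces $\mathcal H^2(B_t(y))=\pi t^2$ for every $t$, and only then does volume-cone rigidity yield $Y=T_yY=\R^2$. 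You would need to insert this intermediate-scale step (or an equivalent substitute) for your argument to go through.
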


\begin{proof}
	Due to Lemma \ref{lem: boundtobound} and Lemma \ref{cor: regtobound},
	the sequence $(\frac 1 {s_i} X,x_i)$ converges to a half-plane $(H,\hat y)$ where $\hat y$ has distance $1$ from $\partial H$.
	Therefore, $\mathcal H^2 (B_1(\hat y)) =\pi$.
 Stability of the Hausdorff measures implies, $$\frac {\mathcal H^2 (B_{s_i} (x_i))} {s_i^2} \to \pi \,.$$
	Applying the Bishop--Gromov  inequality, and the assumption $A=\infty$, we see that for any fixed $t>0$,
	$$\frac { b(x_i, t \cdot r_i)} {r_i}  \to  \pi t^2 \,.$$
	Thus, the ball $B_t(y)$ in $Y$ has $\mathcal H^2$-measure $\pi\cdot t^2$. Since $t$ is arbitrary,
	 the volume-cone rigidity  implies that $Y$ is isometric to $T_yY =\R^2$.
\end{proof}

 Combining the last three statements we easily arrive at
 %the following conclusion.
 \begin{cor} \label{cor: combination}
  The space $Y$ is either $\R^2$ or the half-plane $H$. The case $Y=H$ happens if and only if $A<\infty$. Moreover, in this case $\partial H$  coincides with the limit  of the boundary $\partial X$.
 \end{cor}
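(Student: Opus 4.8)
The plan is to read off the statement from the three preceding lemmas, organised by the value of $A\in[0,\infty]$. For $A=0$, Lemma~\ref{lem: boundtobound} gives directly that $Y$ is isometric to $H$ (with $y\in\partial H$), and for $A=\infty$, Lemma~\ref{lem: bounddisappear} gives that $Y$ is isometric to $\R^2$. Since $H$ and $\R^2$ are not isometric, once the intermediate range $0<A<\infty$ is settled we will have shown that $Y$ is $\R^2$ or $H$, that $A<\infty$ implies $Y\cong H$, and---reading Lemma~\ref{lem: bounddisappear} contrapositively---that $Y\cong H$ forces $A<\infty$; this is the asserted equivalence.

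For the intermediate range $0<A<\infty$ I would reduce to the case $A=0$ by recentering the blow-up at the nearest boundary points. Choose $z_i\in\partial X$ with $d(x_i,z_i)=s_i$. Then $z_i\to z$, so $(z_i,r_i)$ again falls under the Setting, and for this sequence the corresponding quantity vanishes, since $z_i\in\partial X$ gives $d(z_i,\partial X)=0$ and hence $A'=0$. Because $d_{\frac1{r_i}X}(x_i,z_i)=s_i/r_i\to A<\infty$, after passing to a subsequence the points $z_i$ converge to a point $w\in Y$ with $d(y,w)=A$, and $(\frac1{r_i}X,z_i)$ converges to the \emph{same} limit space $Y$, merely recentered at $w$. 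Applying Lemma~\ref{lem: boundtobound} to the sequence $(z_i,r_i)$ now yields $Y\cong H$, with $w\in\partial H$. This completes the first two assertions.

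It remains to identify $\partial H$ with the limit of $\partial X$ when $Y\cong H$. For one inclusion, any point $w\in Y$ arising as a limit of boundary points $z_i\in\partial X$ satisfies $b(w)\le\liminf_i b(z_i)\le\frac12$, by lower semicontinuity of the density together with Lemma~\ref{lem: boundclosed}; since every interior point of $H$ has density $1$, this forces $w\in\partial H$. For the reverse inclusion, suppose some $w\in\partial H$ were not a limit of boundary points. Then, along a subsequence, balls of a fixed radius $\rho$ in $\frac1{r_i}X$ about points converging to $w$ would be disjoint from $\partial X$, hence contained in $X\setminus\partial X$, which has curvature bounded below by Corollary~\ref{cor: inner}. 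These $2$-manifolds without boundary would converge to the half-disc $B_\rho(w)\subset H$, a manifold with nonempty boundary---impossible by Perelman's topological stability theorem, exactly as in the proof of Lemma~\ref{cor: regtobound}. Hence $\partial H$ is precisely the limit of $\partial X$.

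The main obstacle is the intermediate case $0<A<\infty$: there one must exclude the possibility that $Y$ is a Euclidean cone over a circle of length $\pi$, rather than the half-plane. The recentering device sidesteps a direct volume-cone rigidity computation by importing the delicate secondary blow-up already performed inside Lemma~\ref{lem: boundtobound}; verifying that the recentered sequence has the same limit space is the only point demanding care. The reverse inclusion in the final statement is the other genuine input, resting again on the topological-stability mechanism of Lemma~\ref{cor: regtobound} rather than on semicontinuity of the density alone.
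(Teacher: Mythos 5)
Your proposal is correct and follows essentially the same route as the paper: the three lemmas handle $A=0$, $A=\infty$ and the non-boundary statement, and the intermediate case $0<A<\infty$ is reduced to Lemma~\ref{lem: boundtobound} by recentering the blow-up at the nearest boundary points $z_i$, exactly as in the paper's proof. You additionally spell out the ``limit of the boundary'' clause (semicontinuity of the density for one inclusion, the stability mechanism of Lemma~\ref{cor: regtobound} for the other), which the paper treats as directly contained in the lemmas; this is a harmless and correct elaboration.
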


\begin{proof}
	The only statement not directly contained in Lemma \ref{cor: regtobound}, Lemma \ref{lem: boundtobound} and Lemma \ref{lem: bounddisappear} is that for $0<A<\infty$ the space $Y$ is isometric to $H$.  However, if $A<\infty$ we can replace the base points $x_i$ by closest points $z_i \in \partial X$ and apply  Lemma \ref{lem: boundtobound} to deduce the statement.
\end{proof}

 \subsection{Reifenberg's lemma twice}
 We can now finish the
% \begin{prop}
 %There exists a neighborhood $U$ of the point $z$  homeomorphic to
 %a half-plane, such that $\partial X\cap U$ corresponds to the boundary line under this homeomorphism.
 %\end{prop}

\begin{proof}[Proof of Proposition \ref{prop: final}]
  Due to Corollary \ref{cor: combination}, for any sequence $z_i \in \partial X$ converging to $z$ and any sequence $r_i \to 0$  the
  sequence $(\frac 1 {r_i}  \partial X, z_i)$ converges to the line $\partial H =\R$.
    Applying the  Cheeger--Colding--Reifenberg Lemma, \cite[Theorem A1]{CC}, we deduce that
  a neighborhood of $z$ in $\partial X$ is homeomorphic to an interval.

  Consider now the doubling $W= X\cup _{\partial X} X$ of $X$ with its natural length metric, compare \cite[Section 5]{P2}.

  We claim that for any sequence $x_i \in W$ converging to $z$ and any sequence of positive  numbers $r_i$ converging to $0$
the blow-up $\lim (\frac 1 {r_i} W,x_i)$ is isometric to $\R^2$.

  Choosing a subsequence and using the symmetry we may assume that $x_i \in X \subset W$ and that for $s_i:= d(x_i, \partial X)$ the quotients
  $s_i/r_i$ converge to a number $A\in [0,\infty]$.  Applying Corollary
  \ref{cor: combination}, we see that  in the case $A=\infty$, the blow-up coincides with $\lim (\frac 1 {r_i} X,x_i) =\R^2$.

  On the other hand, if $A<\infty$, we can change the base points $x_i$ and assume $x_i \in \partial X$, without changing the isometry class of the blow-up. Then the limit $\lim (\frac 1 {r_i} W,x_i)$ is the doubling of $H=\lim (\frac 1 {r_i} W,x_i)$ along  the boundary $\partial H=\lim
  (\frac 1 {r_i} \partial X, z_i)$.

  %Indeed, if the sequence of points $x_i \in \frac 1 {r_i} W$ has unbounded distance to $\partial X$,  then the claim follows from
%Lemma \ref{lem: bounddisappear}.  If on the other hand $x_i$ have uniformly bounded distance to some points $z_i \in \partial X$, then
%we can replace $x_i$ by $z_i$. Then   $\lim (\frac 1 {r_i} W,z_i)$ converges to the doubling  of $\lim (\frac 1 {r_i} X,z_i)$ along
%$\lim (\frac 1 {r_i} \partial X,z_i)$ hence to $\R^2$.

Having proved the claim, we can now apply the Cheeger-Colding-Reifenberg Lemma a second time  and deduce
that a neighborhood $V$ of $z$ in $W$ is homeomorphic to an open disc.
Passing to a smaller subdisc if necessary, we obtain a homeomorphism between $V$ and the plane which takes $V\cap \partial X$ to a line.

By connectedness reasons we see that $X\cap V$ must be homeomorphic to a half-plane.
\end{proof}

\bibliographystyle{alpha}
\bibliography{Ricci+}
%\begin{thebibliography}{alpha}

%\bibitem{AB} Stephanie Alexander, Richard Bishop.
%\textit{$\mathcal{F}$K-convex functions on metric spaces.}
%Manuscripta Math., 110 (2003) 115-133.

%\end{thebibliography}

%\Addresses

\end{document}